\newtheorem{theo}{Theorem}[section]
\newtheorem{propo}[theo]{Proposition}
\newtheorem{lem}[theo]{Lemma}
\newtheorem{claim}[theo]{Claim}
\newtheorem{quest}[theo]{Question}
\theoremstyle{definition}
\newtheorem{remark}[theo]{Remark}
\numberwithin{equation}{section}
\numberwithin{figure}{section}
\newcommand{\eps}{\varepsilon}
\newcommand{\su}{\subseteq}
\newcommand{\mi}{\setminus}
\newcommand{\Fp}{\mathbb{F}_p}
\newcommand{\Fpn}{\mathbb{F}_p^{n}}
\title{A polynomial bound for the arithmetic $k$-cycle removal lemma in vector spaces}
\author{Jacob Fox\thanks{Department of Mathematics, Stanford University, Stanford, CA 94305. Email: \texttt{jacobfox@stanford.edu}. Research supported by a Packard Fellowship and by NSF Career Award DMS-1352121.}
\and L\'{a}szl\'{o} Mikl\'{o}s Lov\'{a}sz\thanks{Department of Mathematics, UCLA, Los Angeles, CA 90095-1555. Email \texttt{lmlovasz@math.ucla.edu}. Research supported by NSF Postdoctoral Fellowship Award DMS-1705204.}
\and Lisa Sauermann\thanks{Department of Mathematics, Stanford University, Stanford, CA 94305. Email: \texttt{lsauerma@stanford.edu}. Research supported by Jacob Fox's Packard Fellowship.}}
\begin{document}

\maketitle

\begin{abstract}
	For each $k\geq 3$, Green proved an arithmetic $k$-cycle removal lemma 
	for any abelian group $G$. The best known bounds relating the parameters 
	in the lemma for general $G$ are of tower-type.  For $k>3$, even in the 
	case $G=\mathbb{F}_2^n$ no better bounds were known  prior to this paper. 
	This special case has received considerable attention due to its close 
	connection to property testing of boolean functions. For every $k\geq 3$, 
	we prove a polynomial bound relating the parameters for $G=\mathbb{F}_p^n$, 
	where $p$ is any fixed prime. This extends the result for $k=3$ by the first two authors. Due to substantial issues with generalizing the proof of the $k=3$ case, a new strategy is developed in order to prove the result for $k>3$. 
\end{abstract}


\section{Introduction}

Motivated by removal lemmas in graph theory, Green \cite{green} proved the following arithmetic removal lemma for abelian groups:

\begin{theo}[\cite{green}]\label{theo-green} For $k\geq 3$ and any $0<\eps<1$ there exists $\delta=\delta(k,\eps)>0$ such that for any finite abelian group $G$ and any $X_1,\dots,X_k\su G$ at least one of the following holds: the number of $k$-tuples $(x_1,\dots,x_k)\in X_1\times \dots\times X_k$ satisfying $x_1+\dots+x_k=0$ is at least $\delta \vert G\vert^{k-1}$, or we can delete less than $\eps \vert G\vert$ elements from each of the sets $X_1$,\dots, $X_k$ such that afterwards no such $k$-tuples remain.
\end{theo}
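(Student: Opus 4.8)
The plan is to deduce Theorem~\ref{theo-green} from the graph removal lemma for the cycle $C_k$ in a cyclically $k$-partite host graph, which is itself a standard consequence of Szemer\'edi's regularity lemma. Given $G$ and $X_1,\dots,X_k\su G$, I would build a $k$-partite graph $H$ with vertex classes $V_1,\dots,V_k$, each a copy of $G$, and edges only between cyclically consecutive classes: for $1\le i\le k-1$, join $a\in V_i$ to $b\in V_{i+1}$ exactly when $b-a\in X_i$, and join $c\in V_k$ to $a\in V_1$ exactly when $a-c\in X_k$. For a transversal cycle $v_1v_2\cdots v_k$ (with $v_i\in V_i$), the differences $x_i:=v_{i+1}-v_i$ (indices cyclic) automatically satisfy $x_1+\dots+x_k=0$, and the cycle is present in $H$ iff $x_i\in X_i$ for all $i$. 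Conversely, each solution $(x_1,\dots,x_k)\in X_1\times\dots\times X_k$ of $x_1+\dots+x_k=0$ yields exactly $|G|$ such cycles, one for each choice of $v_1\in V_1$; so the number of transversal copies of $C_k$ in $H$ equals $|G|$ times the number of solutions. Note also that the edges between $V_i$ and $V_{i+1}$ split into $|X_i|$ perfect matchings $M_x^{(i)}=\{(a,a+x)\}_{a}$, one for each $x\in X_i$.

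I would prove the contrapositive: if one cannot delete fewer than $\eps|G|$ elements from each $X_i$ so as to destroy all solutions, then there are at least $\delta|G|^{k-1}$ solutions. Suppose instead there were fewer than $\delta|G|^{k-1}$ solutions; then $H$ has fewer than $\delta|G|^k=(\delta/k^k)N^k$ transversal $C_k$'s, where $N=k|G|$. Applying the $C_k$-removal lemma with parameter $\eps':=\eps/(2k^3)$ and choosing $\delta$ small enough that $\delta/k^k$ is below the corresponding threshold $\delta'(\eps')$, we obtain an edge set $S$ with $|S|<\eps'N^2$ meeting every transversal copy of $C_k$ in $H$. The key translation step is an averaging observation: fix a solution $(x_1,\dots,x_k)$; its $|G|$ associated cycles are indexed by $v_1\in V_1$, and for each fixed $i$ the $i$-th edge of these cycles ranges bijectively over $M_{x_i}^{(i)}$ as $v_1$ ranges over $G$. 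Since $S$ meets all $|G|$ of these cycles, summing over $v_1$ gives $\sum_{i=1}^k|S\cap M_{x_i}^{(i)}|\ge|G|$, so some index $i$ has $|S\cap M_{x_i}^{(i)}|\ge|G|/k$. Let $B_i:=\{x\in X_i:|S\cap M_x^{(i)}|\ge|G|/k\}$; then $|B_i|\le \frac{k}{|G|}|S\cap E(V_i,V_{i+1})|\le\frac{k|S|}{|G|}<\eps'k^3|G|=\tfrac{\eps}{2}|G|$. Deleting $B_i$ from each $X_i$ destroys every solution (a surviving solution would have all $x_i\notin B_i$, contradicting $\sum_i|S\cap M_{x_i}^{(i)}|\ge|G|$), contradicting our assumption. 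Hence there are at least $\delta|G|^{k-1}$ solutions, and unwinding yields $\delta=\delta(k,\eps)=k^k\,\delta'\!\big(\eps/(2k^3)\big)$.

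The deepest ingredient, and the reason the resulting bound is tower-type in $1/\eps$, is the $C_k$-removal lemma itself, proved by regularizing $H$ and applying a counting lemma to a reduced pattern; this is precisely the step the present paper must circumvent for $G=\Fpn$. A secondary technical point arises only when $k\ge 4$: a cyclically $k$-partite graph contains ``degenerate'' copies of $C_k$ that do not meet every class, so one must invoke the \emph{partite} version of the removal lemma — which counts and destroys only the canonical transversal copies, and likewise follows from regularity — or else modify the construction to suppress such copies. For $k=3$ this issue disappears, since every triangle in a tripartite graph is transversal, and the argument above is the familiar reduction of Roth/corners-type phenomena to triangle removal. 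An alternative route, closer to Green's original argument, replaces the graph removal lemma by the arithmetic regularity lemma together with a direct Fourier-analytic count of solutions, but with comparable tower-type bounds.
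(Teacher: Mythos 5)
This statement is a cited background result (Green's theorem); the paper does not prove it, but its introduction explicitly describes the two known routes: Green's original argument via an arithmetic (Fourier-analytic) regularity lemma, and the Kr\'al--Serra--Vena reduction to the graph $C_k$-removal lemma. Your proposal is a correct and complete rendition of the latter. The construction of the cyclically $k$-partite host graph, the $|G|$-to-$1$ correspondence between transversal $C_k$'s and solutions, the averaging step $\sum_{i}|S\cap M_{x_i}^{(i)}|\ge |G|$, and the resulting deletion sets $B_i$ with $|B_i|<\eps|G|/2$ all check out, and you correctly flag the one real technical point: for $k\ge 4$ the cyclically $k$-partite graph contains non-transversal copies of $C_k$, so one must use the partite (colored) version of the removal lemma that handles only canonical transversal copies, exactly as Kr\'al--Serra--Vena do. Compared with Green's attributed proof, your route has the advantage of working verbatim for non-abelian groups (the telescoping of consecutive differences does not need commutativity, though one must then be careful about the order of the product), while Green's Fourier-analytic argument is intrinsically abelian; both give tower-type bounds on $\delta$, which is precisely the bottleneck the present paper is written to circumvent in the special case $G=\Fpn$.
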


Green's proof relies on an arithmetic regularity lemma based on Fourier analysis, and his lower bound for $\delta$ is of tower-type ($1/\delta$ is bounded from above by a tower of twos of height polynomial in $k$ and in $1/\eps$). Kr\'{a}l, Serra, and Vena \cite{kral-serra-vena} found an alternative proof, deducing Theorem \ref{theo-green} from the $k$-cycle removal lemma in graphs. Their proof generalizes Theorem \ref{theo-green} to all finite groups (not necessarily abelian). However, relying on the current best known bound for the $k$-cycle removal lemma in graphs, the lower bound on $\delta$ in Theorem \ref{theo-green} obtained from the Kr\'al-Serra-Vena proof is still of tower-type (with the tower height logarithmic in $1/\eps$, using the first author's bound for the graph case in \cite{fox}).

The problem of improving the bounds for $\delta$ in Theorem \ref{theo-green} has received considerable attention (see \cite{BHATTACHARYYA13,BCSX10,BGRS12,BGS15, bx, fox,green,HX15}). The case of $G=\mathbb{F}_2^n$ has attracted particular interest, since it is closely connected to property testing of boolean functions.

In this paper, we will consider the case $G=\Fpn$, where $p\geq 2$ is a prime fixed throughout, while $n$ remains arbitrary. For convenience, set $N=\vert \Fpn\vert= p^n$.

For $G=\Fpn$ the first two authors \cite{foxlovasz} proved Green's arithmetic removal lemma for $k=3$ (i.e. the arithmetic triangle removal lemma) with a polynomial bound on $\delta$ (while $p$ is fixed):

\begin{theo}[\cite{foxlovasz}]\label{theok=3} Let $0<\eps<1$ and $\delta=\eps^{C_{p,3}}$. Then, for any $X_1,X_2,X_3\su \Fpn$, at least one of the following holds: the number of triples $(x_1,x_2,x_3)\in X_1\times X_2\times X_k$ satisfying $x_1+x_2+x_3=0$ is at least $\delta N^{2}$, or we can delete less than $\eps N$ elements from each of the sets $X_1$, $X_2$ and $X_3$ such that afterwards no such triples remain.
\end{theo}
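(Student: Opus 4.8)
The plan is to reduce the arithmetic triangle removal lemma for $\Fpn$ to a statement about linear maps, and then apply a Fourier-analytic or combinatorial density argument that gives polynomial control. First I would rephrase the hypothesis: suppose that the number of solutions $(x_1,x_2,x_3)\in X_1\times X_2\times X_3$ to $x_1+x_2+x_3=0$ is less than $\delta N^2$; I want to delete few elements from each $X_i$ to kill all such solutions. The key object is the "solution graph": build a tripartite graph on vertex classes indexed by $\Fpn$, with $x_1\sim x_2$ whenever $x_1+x_2\in -X_3$ (and similarly for the other pairs); then solutions to $x_1+x_2+x_3=0$ with $x_i\in X_i$ correspond to triangles in the relevant induced subgraph. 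The crucial point, already exploited by Fox and Lov\'asz for $k=3$, is that this graph has a lot of algebraic structure: its adjacency is governed by the indicator functions $\mathbf{1}_{X_i}$, whose Fourier transforms live on the dual group $\widehat{\Fpn}\cong\Fpn$.

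The heart of the argument is to run a Fourier-analytic density increment tailored to $\Fpn$ rather than a general regularity lemma. I would consider the normalized count of triangles and expand it via Fourier coefficients as $\sum_{r\in\Fpn}\widehat{\mathbf{1}_{X_1}}(r)\widehat{\mathbf{1}_{X_2}}(r)\widehat{\mathbf{1}_{X_3}}(r)$ (up to sign conventions). If this is small, then on most "Fourier-rich" linear directions the sets $X_i$ behave quasirandomly, and one can either find a large solution count directly, or pass to a subspace (a coset of a codimension-$O(1)$ subspace determined by the few large Fourier coefficients) on which the densities of the $X_i$ drop or the structure simplifies. Iterating this increment, the codimension grows, but because we only ever collect boundedly many large Fourier coefficients at each step and the densities are bounded, the process must terminate after $\mathrm{poly}(1/\eps)$ steps — this is exactly the mechanism that replaces the tower-type loss by a polynomial one. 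At termination, the $X_i$ restricted to the final subspace $V$ are quasirandom, and one shows that quasirandom sets with no triangle solution must be small on $V$, so they can be deleted. Summing the deletions over all the $\mathrm{poly}(1/\eps)$ many cosets/steps keeps the total below $\eps N$ in each $X_i$, while the exponent $C_{p,3}$ absorbs the polynomial dependence.

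The main obstacle I anticipate is making the density-increment bookkeeping uniform across the three sets simultaneously and controlling the error terms so that the losses genuinely stay polynomial. Concretely: after restricting to a subspace $V$ picked to make $X_1$ quasirandom, the sets $X_2$ and $X_3$ need not be quasirandom there, so one must run a coupled iteration that treats all three sets at once — or, alternatively, one must argue that one can always find a single subspace on which all large Fourier coefficients of all three indicator functions are "accounted for" with only $O(1/\eps)$ (or $\mathrm{poly}(1/\eps)$) total codimension. Keeping track of how the "missing" solution count $\delta N^2$ interacts with the densities $|X_i|/|V|$ as $V$ shrinks — and in particular ensuring that we never accidentally need a subspace of codimension larger than $\mathrm{poly}(1/\eps)$ — is the delicate part; a naive argument would give exponential or tower-type codimension. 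The resolution is to exploit the specific product structure of the triangle count (a single sum over $r\in\Fpn$, not a nested sum), which is special to $k=3$ and is precisely why the abstract says "substantial issues" arise in generalizing to $k>3$.
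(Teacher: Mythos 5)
This statement is quoted from \cite{foxlovasz} and is not reproved in the present paper; the proof there proceeds by a subspace sampling argument that reduces the removal lemma to the tri-colored sum-free theorem of \cite{blasiak-et-al}, which in turn rests on the Croot--Lev--Pach/Ellenberg--Gijswijt polynomial method (Tao's slice rank). Your proposal replaces this with a Fourier-analytic density increment, and that is not merely a different route --- it is the route that is known \emph{not} to give polynomial bounds. The increment scheme you describe is essentially Green's original argument (his arithmetic regularity lemma), which yields tower-type dependence of $\delta$ on $\eps$. The step where your sketch breaks is the claim that ``the process must terminate after $\mathrm{poly}(1/\eps)$ steps.'' To conclude anything from quasirandomness you need the nontrivial Fourier coefficients of the $\mathbf{1}_{X_i}$ to be small compared to $\delta$ itself (the error term in $\sum_{r}\widehat{\mathbf{1}_{X_1}}(r)\widehat{\mathbf{1}_{X_2}}(r)\widehat{\mathbf{1}_{X_3}}(r)$ is only bounded by $\sup_r|\widehat{\mathbf{1}_{X_1}}(r)|$ times an $L^2$ quantity), so the number of ``large'' Fourier coefficients you must account for at each step is on the order of a power of $1/\delta$, where $\delta=\eps^{C_{p,3}}$ is exactly the quantity you are trying to control; the codimension bookkeeping is circular and, when made honest, blows up far beyond polynomial. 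No Fourier-analytic proof of a polynomial bound is known, and this is not an accident of bookkeeping: prior to the polynomial method breakthrough the best bounds for this problem were of tower type even for $p=2$.

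A second, independent sign that the approach cannot work as stated is the constant itself. The exponent $C_{p,3}=1+1/c_{p,3}$ is defined through the optimization $\inf_{0<t<1}(t^0+\cdots+t^{p-1})/t^{(p-1)/3}=p^{1-c_{p,3}}$, which is precisely the entropy-type expression produced by the slice rank bound for tri-colored sum-free sets, and by \cite{bx, KSS17} this exponent is optimal. A density increment has no mechanism that could output this particular constant. If you want a correct proof, the missing ingredients are: (1) the $k$-colored sum-free theorem in $\Fpn$ via slice rank, and (2) the subspace sampling argument of \cite{foxlovasz} that converts a collection of $\eps N$ disjoint triangles into a large tri-colored sum-free-type configuration inside a random subspace, forcing the subspace to be small unless the total triangle count is at least $\eps^{C_{p,3}}N^2$.
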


Here, $C_{p,3}$ is a constant just depending on $p$ and it is given by $C_{p,3}=1+\frac{1}{c_{p,3}}$, where $0<c_{p,3}<1$ is defined via
$$\inf_{0<t<1}\frac{t^0+t^1+\dots+t^{p-1}}{t^{(p-1)/3}}=p^{1-c_{p,3}}.$$
It is not difficult to show that $C_{p,3}=\Theta(\log p)$, see \cite[p. 20]{blasiak-et-al}. It was also shown in \cite{foxlovasz}, based on arguments in \cite{bx} and \cite{KSS17}, that this constant $C_{p,3}$ is the smallest possible exponent for which Theorem \ref{theok=3} is true.
Note that Theorem \ref{theok=3} differs slightly from \cite[Theorem 1]{foxlovasz}, but can be easily obtained from \cite[Theorem 3]{foxlovasz}.

The goal of this paper is to prove that, in the case where $G=\Fpn$ for a fixed prime $p\geq 2$, there is a polynomial bound on $\delta$ in Theorem \ref{theo-green} for any fixed $k\geq 3$.

Given sets $X_1,\dots,X_k\su \Fpn$, a \emph{$k$-cycle} is a $k$-tuple $(x_1,\dots,x_k)\in X_1\times \dots\times X_k$ with $x_1+\dots+x_k=0$. With this notation, our main result is the following, where $C_{p,k}$ is a constant that only depends on $p$ and $k$.

\begin{theo}\label{theo1} Let $k\geq 3$, $0<\eps<1$ and $\delta=\eps^{C_{p,k}}$. Then, for any $X_1,\dots,X_k\su \Fpn$, at least one of the following holds: the number of $k$-cycles $(x_1,\dots,x_k)\in X_1\times \dots\times X_k$ is at least $\delta N^{k-1}$, or we can delete less than $\eps N$ elements from each of the sets $X_1$,\dots, $X_k$ so that afterwards no $k$-cycles remain.
\end{theo}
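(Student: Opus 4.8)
The plan is to reduce the $k$-cycle removal lemma to the $k=3$ case (Theorem~\ref{theok=3}) by a telescoping / grouping argument on the coordinates, and then iterate. Given sets $X_1,\dots,X_k \su \Fpn$ with few $k$-cycles, the first step is to ``contract'' consecutive pairs of the $X_i$ into a single set living in $\Fpn$ (or rather, in an auxiliary copy of $\Fpn$) by considering sumsets: for instance replace $X_1,X_2$ by the set $Y = \{x_1+x_2 : x_1\in X_1, x_2\in X_2\}$, so that a $k$-cycle in $X_1,\dots,X_k$ projects to a $(k-1)$-cycle in $Y,X_3,\dots,X_k$. The difficulty is that this contraction is lossy in both directions --- a single element of $Y$ can have many representations $x_1+x_2$, and conversely controlling $|Y|$ versus $|X_1|,|X_2|$ requires care --- so a naive induction on $k$ fails. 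This is, I expect, exactly the ``substantial issues with generalizing the proof of the $k=3$ case'' the abstract refers to.

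The more robust approach, and the one I would actually pursue, is to set up a weighted / measure-theoretic version of the statement and induct on $k$ within that more flexible framework. Concretely, I would prove a statement of the following shape: if $f_1,\dots,f_k : \Fpn \to [0,1]$ are functions with $\mathbb{E}_{x_1+\dots+x_k=0}[f_1(x_1)\cdots f_k(x_k)] \le \delta$, then there are ``large'' sets $Z_i$ (of measure $<\eps$) such that $f_1\cdots f_k$, restricted off the $Z_i$, has no $k$-cycles in its support --- i.e. $\prod_i (f_i \mathbf{1}_{\Fpn\mi Z_i})$ vanishes on every $k$-cycle. The key structural input for the inductive step is a \emph{density increment / Fourier dichotomy local to $\Fpn$}: either $f_k$ is quasirandom enough that the count $\mathbb{E}_{x_1+\dots+x_k=0}[f_1(x_1)\cdots f_k(x_k)]$ is governed by the product of averages (so the count is large unless some $f_i$ has small average, and then that $f_i$ itself is the small set to delete), or $f_k$ correlates with a low-codimension subspace coset, on which we pass to that subspace and the problem reduces to one over $\mathbb{F}_p^{n'}$ with $n' < n$ --- but crucially with the \emph{same} $k$ and only polynomially degraded parameters, because in $\Fpn$ the regularity-type lemma has polynomial (not tower) bounds. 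Running this until no such correlation exists, one is left with a quasirandom instance, which either has many $k$-cycles or one of the sets is already tiny; either way we win with a polynomial loss.

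To actually close the induction on $k$, the step I would lean on hardest is the passage from $k$ to $k-1$: after ``absorbing'' $x_{k-1}+x_k$ into a new variable, one gets a $(k-1)$-cycle problem, but with one of the ``sets'' replaced by a bounded \emph{function} (a normalized convolution $f_{k-1}*f_k$) rather than a $0/1$ indicator --- which is precisely why I set the induction up for functions $f_i:\Fpn\to[0,1]$ rather than for sets from the start. One must check that $\|f_{k-1}*f_k\|_\infty \le 1$ (true after the right normalization by $N$) and that the few-$k$-cycles hypothesis transfers to a few-$(k-1)$-cycles hypothesis for $f_1,\dots,f_{k-2},f_{k-1}*f_k$, which it does essentially by definition of convolution. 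The deletion sets produced by the $(k-1)$-variable statement then need to be ``pulled back'' to deletion sets for $X_{k-1}$ and $X_k$ separately; here I would use that if $(f_{k-1}*f_k)(y)$ is forced to be deleted, then either $X_{k-1}$ or $X_k$ contributes a controlled amount of mass to the relevant fibers, and a Markov/averaging argument distributes the deletion. Each round of the $k$-induction multiplies the exponent $C_{p,k}$ by a bounded factor and each round of the $n$-induction (the Fourier dichotomy) costs a polynomial factor, so iterating $k-3$ times down to the base case Theorem~\ref{theok=3} yields $C_{p,k}$ depending only on $p$ and $k$, as required. The main obstacle, I anticipate, is making the pull-back of deletion sets from convolutions to the original sets lossless enough (only a constant-factor loss in $\eps$ per step) --- this is where the $k=3$ proof's strategy genuinely breaks and a new idea, presumably a more careful bookkeeping of ``fiber masses'' of the convolution, is needed.
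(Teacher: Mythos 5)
Your proposal has a genuine gap at its core. The mechanism you lean on in the second paragraph --- a Fourier dichotomy / density-increment scheme, justified by the claim that ``in $\Fpn$ the regularity-type lemma has polynomial (not tower) bounds'' --- is not available. Green's arithmetic regularity lemma in $\Fpn$ has tower-type bounds, and this is not an artifact of the proof: tower-type lower bounds are known for arithmetic regularity in $\mathbb{F}_2^n$. More to the point, any argument that regularizes/increments until the instance is quasirandom and then counts cycles via the product of averages is exactly Green's original strategy, and it is what produces the tower-type dependence of $\delta$ on $\eps$ that this theorem is designed to beat. The only known source of a polynomial bound here is the polynomial method (Croot--Lev--Pach, Ellenberg--Gijswijt, slice rank), which enters through the $k=3$ base case; your reduction to that base case must therefore be purely combinatorial, not Fourier-analytic, or the polynomial bound is lost before you ever reach Theorem~\ref{theok=3}.

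Your first and third paragraphs do point in the right direction --- the paper's proof indeed contracts a block of coordinates $x_{\ell+1}+\dots+x_k$ into a single new variable $y$ and reduces a $k$-cycle problem to an $(\ell+1)$-cycle problem with $\ell+1<k$ --- but you leave the decisive difficulty (the ``lossiness'' of the contraction) unresolved, gesturing at convolutions and fiber-mass bookkeeping without a concrete plan. What the paper actually does is: (a) formulate the induction hypothesis as a statement about \emph{disjoint} $k$-cycles (Theorem~\ref{theo2}), so that the contracted instance can be fed back into the induction via a maximal disjoint family rather than via deletion sets that would have to be pulled back through the convolution; (b) restrict the new variable to the set $Y$ of group elements with at most $2\alpha^{\ell-1}N^{\ell-1}$ representations as $-(x_1+\dots+x_\ell)$, which caps the multiplicity of the contraction from above, and use a classification of ``bad tuples'' (partial tuples with too many cycle extensions) to guarantee that a positive fraction of cycles survives this restriction and that multiplicities are also bounded from below; and (c) reach the max-degree hypothesis needed for this machinery (Proposition~\ref{propo3}) by an iterated deletion of high-degree points, followed by a tensor-power trick to clean up logarithmic losses. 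None of these steps appears in your proposal, and without (a) and (b) the contraction argument you sketch does not close: a single value of $x_{\ell+1}+\dots+x_k$ can absorb up to $N^{\ell-1}$ cycles, so the few-cycles hypothesis does not transfer and the deletion sets cannot be distributed back to $X_{\ell+1},\dots,X_k$ with only constant loss by a Markov argument alone.
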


For the exponent of $\eps$ in the polynomial bound for $\delta$ we will have
$$C_{p,k}=(k-2)\cdot (C_{p,3}-1)+1=\frac{k-2}{c_{p,3}}+1.$$
Here $C_{p,3}$ is the exponent from Theorem \ref{theok=3} defined above. 

It remains an interesting question to determine the smallest possible exponent in Theorem \ref{theo1}.

\begin{quest} What is the smallest possible exponent $C_{p,k}$ for which Theorem \ref{theo1} is true?
\end{quest}

Similarly to \cite[Section 4.2]{bx} (see also \cite[Section 3]{foxlovasz}), one can obtain lower bounds for the exponent from (large) constructions of $k$-colored sum-free sets. Therefore, by the result of \cite{lovaszsauermann}, the smallest possible exponent in Theorem \ref{theo1} must be at least $\frac{k-2}{c_{p,k}}+1$, where $0<c_{p,k}<1$ is defined via
$$\inf_{0<t<1}\frac{t^0+t^1+\dots+t^{p-1}}{t^{(p-1)/k}}=p^{1-c_{p,k}}.$$
Our exponent $C_{p,k}=\frac{k-2}{c_{p,3}}+1$ in Theorem \ref{theo1} is within a factor of $O(\log p)$ from this lower bound (independently of the value of $k$), but it is unclear what the best possible exponent is.

Let us briefly explain why Theorem \ref{theo1} is not a straightforward generalization of Theorem \ref{theok=3}. The proof of Theorem \ref{theok=3} in \cite{foxlovasz} relies on a subspace sampling argument to reduce to the so-called tri-colored sum-free theorem, which has been proved in \cite{blasiak-et-al} following the polynomial method breakthrough of Croot-Lev-Pach \cite{crootlevpach} and subsequently Ellenberg-Gijswijt \cite{ellengijs}. Using Tao's slice rank method \cite{tao}, the tri-colored sum-free theorem easily generalizes to a $k$-colored sum-free theorem for $k>3$. However, the subspace sampling argument in the proof of Theorem \ref{theok=3} in \cite{foxlovasz} cannot be extended to $k>3$ in a direct way. This is because although two different 3-cycles can share at most one point, for $k>3$ two different $k$-cycles can have a larger intersection. These larger intersections drastically influence certain conditional probabilities in the subspace sampling argument in \cite{foxlovasz} in such a way that the proof does not extend to $k>3$. Furthermore, for $k>3$ there are linear dependences between different $k$-cycles that cannot be circumvented by the methods in \cite{foxlovasz} and these lead to additional issues with the subspace sampling argument.

Due to these difficulties, instead of trying to apply the subspace sampling method from \cite{foxlovasz}, we will use a very different strategy. However, our proof relies on the results in \cite{foxlovasz} for the case $k=3$ to start an induction on $k$.

The main part of our proof is Proposition \ref{propo3} below. Although a similar statement for the special case $k=3$ occurs in \cite{foxlovasz}, our proof of Proposition \ref{propo3} uses a completely different strategy than the subspace sampling method in \cite{foxlovasz}. We will now outline how to deduce Theorem \ref{theo1} from Proposition \ref{propo3}, which is very similar to the deduction in \cite{foxlovasz}.

Let us call two $k$-cycles $(x_1,\dots,x_k)$ and $(x_1',\dots,x_k')$ disjoint if $x_i\neq x_i'$ for $i=1,\dots,k$. We will prove the following roughly equivalent version of Theorem \ref{theo1} and then deduce the actual statement of Theorem \ref{theo1} from it at the end of the paper.

\begin{theo}\label{theo2} Let $k\geq 3$, $0<\eps<1$ and $\delta=\eps^{C_{p,k}}$. Let $X_1,\dots,X_k\su \Fpn$ be such that there is a collection of at least $\eps N$ disjoint $k$-cycles in $X_1\times \dots\times X_k$. Then the total number of $k$-cycles $(x_1,\dots,x_k)\in X_1\times \dots\times X_k$ is at least $\delta N^{k-1}$.
\end{theo}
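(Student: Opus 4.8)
The plan is to prove Theorem \ref{theo2} by induction on $k$, with the base case $k=3$ supplied (up to the routine translation between the ``disjoint $k$-cycles'' and ``deletion'' formulations) by Theorem \ref{theok=3}. The key conceptual move is to collapse the last two coordinates of a $(k+1)$-cycle into a single coordinate, thereby producing a $k$-cycle. Concretely, given $X_1,\dots,X_{k+1}\su\Fpn$ with a family of at least $\eps N$ disjoint $(k+1)$-cycles, set $Y_i = X_i$ for $i\le k-1$ and let $Y_k = X_k + X_{k+1} = \{x_k + x_{k+1} : x_k\in X_k,\, x_{k+1}\in X_{k+1}\}$. Each disjoint $(k+1)$-cycle $(x_1,\dots,x_{k+1})$ maps to a $k$-cycle $(x_1,\dots,x_{k-1}, x_k+x_{k+1})$ in $Y_1\times\dots\times Y_k$; the difficulty is that the images need not be disjoint in the last coordinate, since many pairs $(x_k,x_{k+1})$ can sum to the same element of $Y_k$. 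So the first real step is a \emph{pigeonholing / popularity} argument: I would partition the $\eps N$ disjoint $(k+1)$-cycles according to their image in $Y_k$ and argue that either a single value $s\in Y_k$ is hit by many of them (the ``concentrated'' case) or the images are spread out enough to give $\ge \eps' N$ disjoint $k$-cycles in $Y_1\times\dots\times Y_k$ for a suitable $\eps'$ (the ``spread'' case). In the spread case, the induction hypothesis applied to $Y_1,\dots,Y_k$ yields $\gtrsim (\eps')^{C_{p,k}} N^{k-1}$ many $k$-cycles, and each such $k$-cycle $(x_1,\dots,x_{k-1},s)$ with $s\in Y_k$ lifts back to at least one $(k+1)$-cycle (choose any representation $s = x_k + x_{k+1}$), but in fact to as many $(k+1)$-cycles as there are representations; bounding below the total count of $(k+1)$-cycles this way should give the claimed $\delta N^k$ with $\delta = \eps^{C_{p,k+1}}$ once the exponents are tracked.

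The concentrated case is where I expect the main work to lie, and it is what forces the precise shape of the exponent $C_{p,k} = (k-2)(C_{p,3}-1)+1$. Suppose a fixed $s\in Y_k$ is the image of $\ge \eps'' N$ of our disjoint $(k+1)$-cycles. Translating the last two coordinates by a fixed element of the fiber over $s$, I can reduce to the situation $s=0$, i.e. we have many disjoint $(k+1)$-cycles whose last two coordinates satisfy $x_k = -x_{k+1}$. Write $Z_k = X_k$ and $Z_{k+1} = X_{k+1}$ restricted to this structure; the point is that we now have a configuration that behaves like a $k$-cycle problem in $X_1,\dots,X_{k-1}$ together with a single ``doubled'' coordinate. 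Here I would \emph{not} recurse through the induction on $k$ again but instead extract a genuine gain: the presence of $\ge \eps'' N$ disjoint $k$-cycles $(x_1,\dots,x_{k-1}, x_k)$ in $X_1\times\dots\times X_{k-1}\times X_k$ (forgetting $x_{k+1}=-x_k$) gives, by induction, $\gtrsim (\eps'')^{C_{p,k}} N^{k-1}$ many $k$-cycles; each extends to a $(k+1)$-cycle by appending $x_{k+1} = -x_k$ provided $-x_k\in X_{k+1}$. That last provision is exactly what the disjointness and the structure of the concentrated family guarantee on a positive proportion. So in both cases we land on the bound; the final optimization chooses the threshold separating ``concentrated'' from ``spread'' to balance $(\eps')^{C_{p,k}}$ against $(\eps'')^{C_{p,k}}\cdot(\text{popularity factor})$, and the resulting recursion $C_{p,k+1} = C_{p,k} + (C_{p,3}-1)$ is precisely what yields the stated formula.

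There are two technical points I would want to handle carefully. First, the notion of ``disjoint $k$-cycles'' must be preserved (or controlled) under the coordinate-collapsing map: collapsing $x_k, x_{k+1}\mapsto x_k+x_{k+1}$ can merge coordinates, so I would need a sub-selection lemma saying that from $\ge\eps N$ disjoint $(k+1)$-cycles one can select $\ge \eps' N$ whose images form a disjoint family of $k$-cycles, \emph{unless} many cycles concentrate on a few values of $x_k + x_{k+1}$ — and ``few'' here should really be ``few popular values times high multiplicity,'' which is where a weighted pigeonhole over the at most $N$ possible values of $x_k+x_{k+1}$ enters. Second, I would need to be a bit careful with the passage from Theorem \ref{theo1} to Theorem \ref{theo2} (and back) and with the $k=3$ base case: Theorem \ref{theok=3} is stated in deletion form, so I would first record the standard equivalence (a set of $\ge \eps N$ disjoint $k$-cycles certifies that one cannot destroy all $k$-cycles by deleting $<\eps N$ elements from each $X_i$, by a greedy/matching argument), obtaining the $k=3$ instance of Theorem \ref{theo2} with the same exponent $C_{p,3}$.

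The step I expect to be the genuine obstacle is making the concentrated case actually \emph{produce} $(k+1)$-cycles in $X_1\times\dots\times X_{k+1}$ rather than merely in $X_1\times\dots\times X_{k-1}\times X_k\times(-X_k)$: I must argue that after appending $x_{k+1}=-x_k$ to the many $k$-cycles produced by the induction hypothesis, a positive fraction actually have $-x_k\in X_{k+1}$. This requires that the $k$-cycle count delivered by induction is robust enough that restricting the $k$-th coordinate to the set $\{x_k : -x_k\in X_{k+1}\}$ — which has size $\ge \eps'' N$ by the concentrated family — still leaves $\gtrsim (\eps'')^{C_{p,k+1}} N^{k-1}$ of them; equivalently, I may need to apply the induction hypothesis not to $X_k$ but to $X_k \cap (-X_{k+1})$ from the outset, which is clean because the concentrated disjoint $(k+1)$-cycles already live inside $X_1\times\dots\times X_{k-1}\times (X_k\cap(-X_{k+1}))\times X_{k+1}$. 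Organizing the argument so that this intersection is baked in from the start should dissolve the obstacle, and then all that remains is the exponent bookkeeping.
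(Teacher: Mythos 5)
There is a genuine gap, and it sits exactly at the ``lift back'' step in both of your cases: a dimension count shows it cannot work as stated. You need $\delta N^{k}$ many $(k+1)$-cycles (the target exponent is $(k+1)-1=k$), but the induction hypothesis applied to the collapsed sets $Y_1\times\dots\times Y_{k-1}\times(X_k+X_{k+1})$ only delivers $\gtrsim(\eps')^{C_{p,k}}N^{k-1}$ many $k$-cycles, and each of these lifts to as many $(k+1)$-cycles as its last coordinate $s$ has representations $s=x_k+x_{k+1}$. You therefore need the cycles produced by the induction hypothesis to sit over values $s$ with $\gtrsim N$ representations on average, and the induction hypothesis gives you no control whatsoever over \emph{where} those cycles live; the representation function of $X_k+X_{k+1}$ can be $1$ on most of its support. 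The concentrated case has the same shortfall one level down: if many disjoint $(k+1)$-cycles satisfy $x_k+x_{k+1}=s$, then $x_1+\dots+x_{k-1}=-s$, so what you actually obtain are $(k-1)$-cycles (your claim that $(x_1,\dots,x_k)$ is a $k$-cycle after ``forgetting $x_{k+1}=-x_k$'' is false, since $x_1+\dots+x_k=x_k\neq 0$ in general); induction then gives $\gtrsim N^{k-2}$ configurations, and appending a free $x_k\in X_k\cap(-X_{k+1}+s)$ multiplies by at most $N$, again landing at $N^{k-1}$ rather than $N^{k}$. No choice of threshold between the two cases repairs a loss that is polynomial in $N$ rather than in $\eps$.

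The paper's proof is built precisely to avoid lifting. It first regularizes degrees by an iterative deletion process (so that every point lies in at most $\theta\delta'N^{k-2}$ cycles), sets up a hierarchy of ``bad'' partial tuples, and extracts a sub-collection $M$ of $k$-cycles on which both an upper bound on extensions of $I_1$-tuples and a non-badness condition on $I_2$-tuples hold. It then collapses the $I_2$-coordinates into a single coordinate ranging over a set $Y$ defined by having \emph{few} representations, applies the induction hypothesis to get a \emph{lower} bound on the number of collapsed $(\ell+1)$-cycles, and plays this against the \emph{upper} bound forced by the definition of $Y$. The resulting numerical inequality in $\alpha$, $r$, $\theta$ is what yields $\delta'\theta^{C_{p,k}}\geq 2^{-4kC_{p,k}}$, and a power trick then removes the logarithmic losses from the deletion step. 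If you want to salvage your outline, the essential missing ideas are the degree regularization (without a bound like $\theta$ the final inequality does not close) and the replacement of the lifting step by a two-sided count of the collapsed cycles.
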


We will prove Theorem \ref{theo2} by induction on $k$. The base case $k=3$ is an alternative version of Theorem \ref{theok=3} above that was also proved by the first two authors \cite[Theorem 3]{foxlovasz}. For the induction step, the main challenge is to prove the following key proposition.

\begin{propo}\label{propo3} Let $k\geq 4$ be given such that Theorem \ref{theo2} is true for all smaller values of $k$. Let $X_1,\dots,X_k\su \Fpn$ and assume that the number of $k$-cycles $(x_1,\dots,x_k)\in X_1\times \dots\times X_k$ is equal to $\delta' N^{k-1}$ for some $\delta'>0$. Finally let $\theta\geq 1$ be a real number such that for every $i=1,\dots,k$, each point of $X_i$ occurs as $x_i$ in at most $\theta\delta'N^{k-2}$ different $k$-cycles $(x_1,\dots,x_k)\in X_1\times \dots\times X_k$. Then
$$\delta'\theta^{C_{p,k}}\geq 2^{-4kC_{p,k}}.$$
\end{propo}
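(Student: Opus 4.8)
The statement is a "regularity-free" bound: we want to show that if a $k$-uniform sum-free-type configuration has few $k$-cycles ($\delta'$ small) and the cycles are spread out evenly ($\theta$ close to $1$), then in fact $\delta'$ cannot be too small. Since Theorem \ref{theo2} is assumed for all smaller values of $k$, the natural move is to \emph{reduce a $k$-cycle problem to a $(k-1)$-cycle problem} by collapsing two coordinates into one. Concretely, introduce a new "merged" set
\[
Y=\{x_{k-1}+x_k : (x_1,\dots,x_k)\in X_1\times\dots\times X_k \text{ is a $k$-cycle}\}\su\Fpn,
\]
so that $(x_1,\dots,x_{k-2},y)\in X_1\times\dots\times X_{k-2}\times Y$ with $x_1+\dots+x_{k-2}+y=0$ is exactly a $(k-1)$-cycle whenever $y\in Y$; every $k$-cycle of the original system projects to a $(k-1)$-cycle of this new system. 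The plan is to run the induction hypothesis on the system $X_1,\dots,X_{k-2},Y$, so the first step is to produce many \emph{disjoint} $(k-1)$-cycles in it. For that I would take a maximal collection of pairwise disjoint $k$-cycles in the original system; a greedy/counting argument using the $\theta$-boundedness hypothesis shows that a maximal such collection has size at least $\delta' N/(k\theta)$ up to constants (each chosen cycle "kills" at most $k\cdot\theta\delta' N^{k-2}$ other cycles through each of its $k$ vertices, and there are $\delta' N^{k-1}$ cycles total). Projecting these disjoint $k$-cycles down gives $\gtrsim \delta' N/(k\theta)$ $(k-1)$-cycles in $X_1\times\dots\times X_{k-2}\times Y$ that are disjoint in the first $k-2$ coordinates — and with a little care (discarding at most half of them) disjoint in the $Y$-coordinate as well, since a point $y\in Y$ lies in at most $\theta\delta'N^{k-2}\cdot(\text{something})$ cycles.

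**Applying the induction hypothesis and pulling back.** Once we have a family of $\approx \eps' N$ disjoint $(k-1)$-cycles with $\eps' = \delta'/(2k\theta)$ (say), Theorem \ref{theo2} for $k-1$ gives that the total number of $(k-1)$-cycles in $X_1\times\dots\times X_{k-2}\times Y$ is at least $(\eps')^{C_{p,k-1}} N^{k-2}$. Now I want to pull this back up to a lower bound on the number of $k$-cycles in the original system. The key point is that each $(k-1)$-cycle $(x_1,\dots,x_{k-2},y)$ with $y\in Y$ lifts to at least one $k$-cycle $(x_1,\dots,x_{k-2},x_{k-1},x_k)$ with $x_{k-1}+x_k=y$ — but we need to count \emph{all} such lifts, and for that we need a lower bound on the number of representations $y = x_{k-1}+x_k$ with $x_{k-1}\in X_{k-1}$, $x_k\in X_k$ \emph{in the relevant configurations}. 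This is where the $\theta$-boundedness is used a second time, now as an \emph{upper} bound: since no point of $X_{k-1}$ (resp. $X_k$) lies in more than $\theta\delta'N^{k-2}$ $k$-cycles, and since there are $\delta' N^{k-1}$ $k$-cycles total, the "typical" $y\in Y$ must have many representations — more precisely, by a pigeonhole/Cauchy–Schwarz argument on the fiber sizes, summing the number of $k$-cycles over $(x_1,\dots,x_{k-2})$ and comparing with the number of $(k-1)$-cycles forces the average number of $(x_{k-1},x_k)$-lifts per $(k-1)$-cycle to be at least $\delta' N /(\theta\delta' N) = 1/\theta$ times the right normalization. Carrying the constants through, the number of $k$-cycles is at least roughly $(\eps')^{C_{p,k-1}} N^{k-2}\cdot (\delta'/\theta)\cdot N$, and this should be compared with the assumed value $\delta' N^{k-1}$.

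**The resulting inequality and choice of constants.** Equating/bounding, we get something of the shape
\[
\delta' N^{k-1} \;\gtrsim\; \Bigl(\frac{\delta'}{2k\theta}\Bigr)^{C_{p,k-1}} \cdot \frac{\delta'}{\theta}\cdot N^{k-1},
\]
i.e. $1 \gtrsim (2k)^{-C_{p,k-1}}\,\delta'^{\,C_{p,k-1}}\,\theta^{-C_{p,k-1}-1}$, which rearranges to a lower bound $\delta'\theta^{(C_{p,k-1}+1)/C_{p,k-1}} \gtrsim (2k)^{-1}\cdot(\text{const})$ — and here is where the precise recursion $C_{p,k} = (k-2)(C_{p,3}-1)+1$ must be checked to make the exponents line up: one wants the exponent of $\theta$ on the left to be at most $C_{p,k}$ and the exponent of $\delta'$ to be $1$, so one should track them carefully, possibly using $C_{p,k}=C_{p,k-1}+(C_{p,3}-1)$ to absorb the extra factor of $\theta$. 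The final numerical constant $2^{-4kC_{p,k}}$ is generous enough to swallow the polynomial-in-$k$ losses ($2k$, the factor-$2$ from the disjointness cleanup, Cauchy–Schwarz slack) provided one is not wasteful.

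**Main obstacle.** The hard part is the \emph{pull-back / lifting step}: converting a lower bound on the number of $(k-1)$-cycles in the merged system into a lower bound on the number of $k$-cycles in the original system. The danger is that the lower bound from the induction hypothesis could be concentrated on $(k-1)$-cycles whose $Y$-coordinate $y$ has very few representations $y=x_{k-1}+x_k$ in $X_{k-1}\times X_k$, in which case each such $(k-1)$-cycle lifts to only one or two $k$-cycles and we lose a factor of $N$. Overcoming this requires using the $\theta$-boundedness hypothesis not just to get disjoint cycles but also to control — via a second-moment or pigeonhole argument on the representation function of $Y$ weighted by cycle counts — that the induction-provided $(k-1)$-cycles cannot all be "thin" in this sense; equivalently, one must choose the merged set $Y$ (or a weighted version of it, or restrict to a high-multiplicity part of it) so that the bookkeeping closes. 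Getting this interplay exactly right, so that both appearances of $\theta$ (lower bound on spread for disjointness, upper bound on concentration for lifting) combine into the single clean inequality $\delta'\theta^{C_{p,k}}\geq 2^{-4kC_{p,k}}$, is the crux of the argument.
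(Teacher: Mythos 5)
Your starting move --- merging a block of coordinates into a single sum-set $Y$ and feeding the resulting shorter cycles into the induction hypothesis, with the disjoint family extracted greedily from the $\theta$-boundedness --- is indeed the engine of the paper's argument (its Lemma \ref{newlemma}). But the step you yourself flag as the crux, the ``pull-back,'' is a genuine gap, and the way you propose to close it cannot work. If each $(k-1)$-cycle of the merged system lifts to at least $m$ $k$-cycles, you get $\delta'N^{k-1}\geq (\eps')^{C_{p,k-1}}N^{k-2}\cdot m$, which is only useful if $m$ is comparable to $N$; but a given $y\in Y$ may have as few as one representation $y=x_{k-1}+x_k$ inside actual $k$-cycles, and the induction hypothesis gives you no control over \emph{which} $(k-1)$-cycles it produces, so no second-moment or pigeonhole argument forces them onto high-multiplicity $y$'s. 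Worse, the schematic inequality you write down, $1\gtrsim (2k)^{-C_{p,k-1}}\delta'^{\,C_{p,k-1}}\theta^{-C_{p,k-1}-1}$, rearranges to an \emph{upper} bound on $\delta'$ in terms of $\theta$, which is vacuous; it does not yield $\delta'\theta^{C_{p,k}}\geq 2^{-4kC_{p,k}}$. (A smaller slip: the maximal disjoint family has size at least $N/(k\theta)$, not $\delta'N/(k\theta)$; the $\delta'$ in the numerator and denominator cancel.)

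The paper avoids lifting altogether by running the comparison in the opposite direction. It defines $Y$ to consist only of those $y$ for which $x_1+\dots+x_\ell=-y$ has \emph{at most} $2\alpha^{\ell-1}N^{\ell-1}$ solutions, where $\alpha=(\theta\delta')^{1/(k-2)}$. This gives an \emph{upper} bound $2t\alpha^{\ell-1}N^{\ell-1}$ on the total number of merged cycles supported on the $t$ disjoint ones, and comparing that cap with the induction-hypothesis lower bound $(t/N)^{C_{p,\ell+1}}N^{\ell}$ forces $\alpha^{\ell-1}\gtrsim (r/(4k\theta))^{(\ell-1)(C_{p,3}-1)}$, which is a lower bound on $\theta\delta'$ of exactly the desired shape. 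The price of restricting $Y$ this way is that some $k$-cycles have their $I_2$-sum outside $Y$; handling this is what forces the entire ``bad tuple'' apparatus and the two-case analysis occupying most of Section \ref{sect2} (whether at least half of all $k$-cycles contain a bad tuple, and if so, the construction of the subfamily $M_k$ via a minimal bad index set), none of which appears in your proposal. In particular, the pair of coordinates to merge cannot be fixed in advance as $\{k-1,k\}$: the partition $I_1\cup I_2$ must be chosen according to where the minimal bad tuples live. Without this restriction-plus-upper-bound mechanism, the interplay between the two uses of $\theta$ that you correctly identify as the difficulty does not close.
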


In order to perform the induction step for proving Theorem \ref{theo2}, we will use Proposition \ref{propo3}, after repeatedly deleting points which are in a relatively large number of $k$-cycles. This way, we will be able to apply the proposition, taking $\theta$ to be roughly $1/\eps$ (up to logarithmic factors). In this way, we obtain a lower bound on the number of $k$-cycles in $X_1\times \dots\times X_k$. This bound will be similar to the desired bound in Theorem \ref{theo2}. Using a power trick, we can then obtain the actual desired bound, and finish the induction step.

The proof of Proposition \ref{propo3}, which is the heart of our argument, will be given in Section \ref{sect2}, apart from a lemma which we will postpone to Section \ref{sect4}. In Section \ref{sect5} we will perform the induction for proving Theorem \ref{theo2} and finally deduce Theorem \ref{theo1}.

\textit{Notation.} All logarithms are with base 2. For a positive integer $m$, let $[m]=\lbrace 1,\dots,m\rbrace$.

\section{Proof of Proposition \ref{propo3}}\label{sect2}

Let $X_1,\dots,X_k\su \Fpn$ and let the real numbers $\delta'>0$ and $\theta\geq 1$ be as in Proposition \ref{propo3}. Let $\alpha=(\theta\delta')^{\frac{1}{k-2}}$, that is $\theta\delta'=\alpha^{k-2}$. Recall that $k\geq 4$ and that we assume that Theorem \ref{theo2} is true for all smaller values of $k$.

For any subset $I\su [k]$ with size $1\leq \vert I\vert\leq k-2$, let an \emph{$I$-tuple} be a tuple $(x_i)_{i\in I}\in \prod_{i\in I}X_i$, that is, some tuple of elements of $\Fpn$ indexed by the set $I$ where $x_i\in X_i$ for each $i\in I$. Let us call an $I$-tuple $(x_i)_{i\in I}\in \prod_{i\in I}X_i$ \emph{bad} if there are at least $2\alpha^{k-\vert I\vert-1}N^{k-\vert I\vert-1}$ different ways to extend $(x_i)_{i\in I}$ to a $k$-cycle $(x_1,\dots,x_k)\in X_1\times \dots\times X_k$. We say that a $k$-cycle $(x_1,\dots, x_k)\in X_1\times\dots\times X_k$ \emph{contains a bad tuple} if there is $I\su [k]$ such that $(x_i)_{i\in I}$ is a bad $I$-tuple.

Note that in the case $\vert I\vert=1$ we have
$$2\alpha^{k-\vert I\vert-1}N^{k-\vert I\vert-1}=2\alpha^{k-2}N^{k-2}=2\theta\delta'N^{k-2}.$$
So if $\vert I\vert=1$, then there are no bad $I$-tuples, because for each $i=1,\dots,k$, each point of $X_i$ occurs as $x_i$ in at most $\theta\delta'N^{k-2}$ different $k$-cycles $(x_1,\dots,x_k)\in X_1\times \dots\times X_k$.

A crucial step for proving Proposition \ref{propo3} is to show the following lemma.

\begin{lem}\label{newlemma} Let $M$ be a collection of some $k$-cycles in $X_1\times \dots\times X_k$ such that $\vert M\vert\geq r\delta'N^{k-1}$ for a real number $0<r<1$. Furthermore, let $I_1,I_2\su [k]$ be two disjoint subsets with $[k]=I_1\cup I_2$, $2\leq \vert I_1\vert\leq k-2$ and $2\leq \vert I_2\vert\leq k-2$.
Assume that
\begin{itemize}
\item[(i)] For every $I_1$-tuple $(x_i)_{i\in I_1}\in \prod_{i\in I_1}X_i$, there are at most $2\alpha^{k-\vert I_1\vert-1}N^{k-\vert I_1\vert-1}$ different ways to extend $(x_i)_{i\in I_1}$ to a $k$-cycle $(x_1,\dots,x_k)\in M$.
\item[(ii)] For every $(x_1,\dots,x_k)\in M$, the $I_2$-tuple $(x_i)_{i\in I_2}$ is not a bad $I_2$-tuple.
\end{itemize}
Then
$$\delta'\theta^{C_{p,k}}\geq \left(\frac{r}{4k}\right)^{C_{p,k}}.$$
\end{lem}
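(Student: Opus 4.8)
The plan is to reduce Lemma~\ref{newlemma} to the inductive hypothesis (Theorem~\ref{theo2} for smaller $k$) applied to a suitable ``collapsed'' instance. Write $m_1 = \vert I_1\vert$ and $m_2 = \vert I_2\vert$, so $m_1 + m_2 = k$ with $2 \le m_1, m_2 \le k-2$. The idea is to think of each $k$-cycle $(x_1,\dots,x_k) \in M$ as a pair consisting of an $I_1$-tuple $u = (x_i)_{i\in I_1}$ and an $I_2$-tuple $v = (x_i)_{i\in I_2}$ whose ``sums'' cancel: if we set $y = \sum_{i\in I_1} x_i$ and $z = \sum_{i\in I_2} x_i$, then $y + z = 0$. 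So I would define $Y \subseteq \Fpn$ to be the set of values $\sum_{i\in I_1} x_i$ over all $I_1$-tuples appearing in $M$, and $Z \subseteq \Fpn$ the analogous set of partial sums over $I_2$. Then every $k$-cycle in $M$ projects to a pair $(y,z) \in Y \times Z$ with $y + z = 0$, i.e.\ to a $2$-cycle; but a $2$-cycle removal statement is trivial, so this is too lossy. Instead I would keep $I_1$ ``expanded'': treat the instance as an $(m_1+1)$-cycle problem with colour classes $X_i$ for $i \in I_1$ together with $Z$, where a cycle is $((x_i)_{i\in I_1}, z)$ with $\sum_{i\in I_1} x_i + z = 0$ and the $(x_i)$ form an extendable $I_1$-tuple from $M$ and $z$ is a partial $I_2$-sum realized in $M$. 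Since $m_1 \le k-2$, we have $m_1 + 1 \le k-1 < k$, so Theorem~\ref{theo2} applies at level $m_1+1$; symmetrically we can instead expand $I_2$ and collapse $I_1$, getting level $m_2 + 1 < k$.

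The key steps, in order, are: (1) Fix the collapsing, say collapse $I_2$ to a single coordinate $Z$ and keep $I_1$. Define the collapsed family $M'$ of $(m_1+1)$-cycles and bound $\vert Z\vert$ and the relevant multiplicities. (2) Produce a large collection of \emph{disjoint} cycles in the collapsed instance: here hypothesis~(ii) is the crucial input, because ``$(x_i)_{i\in I_2}$ is not bad'' bounds how many original $k$-cycles in $M$ can map to a single collapsed cycle --- at most $2\alpha^{k - m_2 - 1} N^{k - m_2 - 1} = 2\alpha^{m_1 - 1} N^{m_1 - 1}$ of them. Dividing $\vert M\vert \ge r\delta' N^{k-1}$ by this fibre bound gives at least roughly $\frac{r\delta'}{2\alpha^{m_1-1}} N^{k - 1 - (m_1 - 1)} = \frac{r\delta'}{2\alpha^{m_1-1}} N^{m_2}$ distinct collapsed cycles; then a greedy argument (deleting at most $m_1 + 1$ coordinate-hyperplanes worth of cycles at each step, each of size at most the maximum multiplicity of a point, which is controlled by the analogue of hypothesis~(i)) extracts a disjoint subcollection of size at least this count divided by something like $(m_1+1) \cdot \max\text{-multiplicity}$. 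The multiplicities in the collapsed instance in coordinates $i \in I_1$ are controlled by hypothesis~(i), which says each $I_1$-tuple extends to at most $2\alpha^{k - m_1 - 1} N^{k - m_1 - 1}$ cycles of $M$; and in the new coordinate $z$, by the $\vert I_2\vert = 1$ observation in the text (no bad singletons) or directly by the hypothesis on $\theta$. Working through the arithmetic, the number of disjoint collapsed cycles should come out to at least $\frac{r}{4k} \cdot \frac{1}{N^{\,?}} \cdot N^{m_2}$ — I expect it to be $\ge \frac{r}{4k}\cdot \eps'' N$ for a suitable $\eps''$ comparable to $\alpha^{\text{something}}$, matching the hypothesis format of Theorem~\ref{theo2}. (3) Apply Theorem~\ref{theo2} at level $m_1 + 1$: this yields a lower bound of the form $(\text{disjoint count}/N)^{C_{p,m_1+1}} \cdot N^{m_1}$ on the number of collapsed $(m_1+1)$-cycles, hence (multiplying back by fibre sizes, which can only increase the count) a lower bound on $\vert M\vert$, hence ultimately on $\delta'$. (4) Do the symmetric computation collapsing $I_1$ instead, getting a bound involving $C_{p,m_2+1}$, and combine the two bounds --- probably multiply them --- to get a bound that, after taking into account $\alpha^{k-2} = \theta\delta'$ and $C_{p,k} = (k-2)(C_{p,3}-1)+1$, rearranges into $\delta'\theta^{C_{p,k}} \ge (r/4k)^{C_{p,k}}$. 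The exponent bookkeeping is where the precise definition $C_{p,k} = \frac{k-2}{c_{p,3}}+1$ and the additivity $C_{p,m_1+1} + C_{p,m_2+1} - 1$-type identity (since $(m_1 - 1) + (m_2 - 1) = k - 2$) must line up exactly.

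The main obstacle I anticipate is step~(2): extracting enough \emph{disjoint} cycles in the collapsed instance while keeping all the multiplicity bounds under control, and in particular making sure the substitution $\alpha = (\theta\delta')^{1/(k-2)}$ is used in the right place so that the parameter fed into Theorem~\ref{theo2} is genuinely of the form ``$\eps' N$ disjoint cycles'' with $\eps'$ not too small. The bad-tuple/good-tuple dichotomy is precisely engineered so that (i) bounds fibre sizes and point-multiplicities from above in the expanded coordinates, and (ii) bounds the collapsing fibres; the threshold $2\alpha^{k - \vert I\vert - 1} N^{k - \vert I\vert - 1}$ is chosen so these bounds telescope correctly across the two complementary collapsings. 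A secondary obstacle is the final exponent arithmetic: one must verify that combining the level-$(m_1+1)$ and level-$(m_2+1)$ conclusions, with the losses of $4k$-type factors at each greedy deletion step, really collapses to the single clean factor $(r/4k)^{C_{p,k}}$ rather than something with a worse constant or a residual power of $N$; this should follow from $C_{p,\ell}$ being affine in $\ell$ and from $N$ being eliminated because the total $N$-exponent on both sides is $k-1$, but it needs to be checked carefully.
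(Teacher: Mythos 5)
Your skeleton is the right one and matches the paper's: collapse $I_2$ to a single coordinate, extract a large disjoint family of $(\vert I_1\vert+1)$-cycles, and apply Theorem \ref{theo2} at level $\vert I_1\vert+1<k$. But there are genuine gaps in how you close the argument. First, step (3) as you describe it does not work: Theorem \ref{theo2} lower-bounds the number of \emph{all} $(\vert I_1\vert+1)$-cycles in the collapsed instance, almost none of which need be images of cycles of $M$, so ``multiplying back by fibre sizes'' gives no lower bound on $\vert M\vert$ --- and a lower bound on $\vert M\vert$ would be useless anyway, since you already have one by hypothesis. What the argument actually needs is an \emph{upper} bound on the number of collapsed cycles to play off against the Theorem \ref{theo2} lower bound. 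This is why the collapsed coordinate set cannot be ``the set of realized $I_2$-sums'': writing $\ell=\vert I_1\vert$ and relabelling so that $I_1=[\ell]$, it must be $Y=\{y: x_1+\dots+x_\ell=-y$ has at most $2\alpha^{\ell-1}N^{\ell-1}$ solutions in $X_1\times\dots\times X_\ell\}$. Then the total number of $(\ell+1)$-cycles through the $t$ disjoint representatives is at most $t\cdot 2\alpha^{\ell-1}N^{\ell-1}$, and comparing with the lower bound $(t/N)^{C_{p,\ell+1}}N^{\ell}$ yields $2\alpha^{\ell-1}\ge (t/N)^{C_{p,\ell+1}-1}$, which is the inequality that drives everything. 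Hypothesis (ii) enters exactly here, via the contrapositive: if the $I_2$-sum of a cycle of $M$ lay outside $Y$, the $I_2$-tuple would be bad; so (ii) guarantees the collapse lands in $Y$. It does \emph{not} bound the fibres of the collapsing map (the fibre over a fixed collapsed cycle is controlled by (i), which caps the extensions of the fixed $I_1$-tuple inside $M$), and the quantity $2\alpha^{m_1-1}N^{m_1-1}$ you quote is the bound on the number of $I_1$-completions of a non-bad $I_2$-tuple, not a collapsing-fibre bound.

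Second, your step (4) is both unavailable and unnecessary. It is unavailable because the hypotheses are genuinely asymmetric: (i) bounds extensions of $I_1$-tuples only to cycles \emph{in $M$}, so you cannot conclude that the $I_1$-tuples occurring in $M$ are not bad, and hence the symmetric collapse of $I_1$ has no analogue of the step where (ii) forces the collapsed sum into $Y$. It is unnecessary because a single collapse suffices: from $\alpha^{\ell-1}\ge\left(\frac{r}{4k\theta}\right)^{(\ell-1)(C_{p,3}-1)}$ one raises both sides to the power $\frac{k-2}{\ell-1}$, and $\alpha^{k-2}=\theta\delta'$ together with $C_{p,k}-1=(k-2)(C_{p,3}-1)$ gives $\theta\delta'\ge\left(\frac{r}{4k\theta}\right)^{C_{p,k}-1}$, which rearranges to the claim. (Your step (2) is right in spirit --- the paper takes a maximal disjoint family of size $t$ and bounds $\vert M\vert\le 2kt\theta\delta'N^{k-2}$ using the $\theta\delta'N^{k-2}$ per-point bound on the $X_i$-coordinates and a $4\theta\delta'N^{k-2}$ per-point bound on the $Y$-coordinate, the latter combining the definition of $Y$ with (i) --- but your arithmetic there rests on the misattributed fibre bound and would need to be redone.)
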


We will postpone the proof of Lemma \ref{newlemma} to Section \ref{sect4}. In the proof of the lemma, we will use the hypothesis that Theorem \ref{theo2} holds for all smaller values of $k$.

Our strategy for proving Proposition \ref{propo3} is to construct a suitable collection $M$ of $k$-cycles in $X_1\times \dots\times X_k$ to which we can apply Lemma \ref{newlemma}. We will distinguish two cases, whether at most half or more than half of all $k$-cycles in $X_1\times \dots\times X_k$ contain a bad tuple. The first case is relatively easy, because we can apply Lemma \ref{newlemma} to the collection of $k$-cycles not containing any bad tuple (and these will be at least half of all $k$-cycles in $X_1\times \dots\times X_k$). In the second case, the construction of a suitable collection $M$ will be more involved.

First, suppose that the number of $k$-cycles not containing any bad tuple is at least $\frac{1}{2}\delta'N^{k-1}$. Then we can apply Lemma \ref{newlemma} with $r=\frac{1}{2}$ and $M$ being the collection of all $k$-cycles not containing any bad tuple. Indeed, choose any disjoint subsets $I_1,I_2\su [k]$ with $[k]=I_1\cup I_2$, $2\leq \vert I_1\vert\leq k-2$ and $2\leq \vert I_2\vert\leq k-2$, for example $I_1=\lbrace 1,2\rbrace$ and $I_2=\lbrace3,\dots,k\rbrace$ (recall that $k\geq 4$). Let us check the assumptions (i) and (ii):
\begin{itemize}
\item[(i)] Let $(x_i)_{i\in I_1}\in \prod_{i\in I_1}X_i$. If the $I_1$-tuple $(x_i)_{i\in I_1}$ is not bad, then it can be extended in at most $2\alpha^{k-\vert I_1\vert-1}N^{k-\vert I_1\vert-1}$ different ways to a $k$-cycle $(x_1,\dots,x_k)\in X_1\times \dots\times X_k$ and so in particular in at most $2\alpha^{k-\vert I_1\vert-1}N^{k-\vert I_1\vert-1}$ different ways to a $k$-cycle $(x_1,\dots,x_k)\in M$. And if the $I_1$-tuple $(x_i)_{i\in I_1}$ is bad, then it cannot be extended to any $k$-cycle in $M$ at all (recall that we chose $M$ to be the collection of all $k$-cycles not containing a bad tuple). So assumption (i) is satisfied.
\item [(ii)] Let $(x_1,\dots,x_k)\in M$. By the choice of $M$, the $k$-cycle $(x_1,\dots,x_k)$ does not contain any bad tuple. In particular, the $I_2$-tuple $(x_i)_{i\in I_2}$ is not bad.
\end{itemize}
So we can indeed apply Lemma \ref{newlemma} and obtain (as $k\geq 4$)
$$\delta'\theta^{C_{p,k}}\geq \left(\frac{r}{4k}\right)^{C_{p,k}}= \left(\frac{1}{8k}\right)^{C_{p,k}}\geq\left(\frac{1}{8\cdot 2^k}\right)^{C_{p,k}}= \left(\frac{1}{2^{k+3}}\right)^{C_{p,k}}\geq \left(\frac{1}{2^{4k}}\right)^{C_{p,k}}=2^{-4kC_{p,k}}.$$
This would establish the claim of Proposition \ref{propo3}.

So from now on we can assume that the number of $k$-cycles not containing any bad tuple is at most $\frac{1}{2}\delta'N^{k-1}$. Then the number of $k$-cycles that contain a bad tuple is at least
$$\left(1-\frac{1}{2}\right)\delta'N^{k-1}=\frac{1}{2}\delta'N^{k-1}.$$
For each of these $k$-cycles $(x_1,\dots,x_k)$ choose some minimum size set $I\su [k]$ such that $(x_i)_{i\in I}$ is a bad $I$-tuple. Note that by the pigeonhole principle, some set $I\su [k]$ must have been chosen at least $\frac{1}{2^{k+1}}\delta'N^{k-1}$ times. From now on, let us fix such a set $I\su [k]$.

Note that $\vert I\vert\leq k-2$ (since this was assumed in the definition of bad $I$-tuple above) and also $\vert I\vert\geq 2$ since there are no bad tuples if $\vert I\vert =1$.

Let $\ell=\vert I\vert$, then $2\leq \ell\leq k-2$. Upon relabeling the indices, we can assume without loss of generality that $I=[\ell]$. Let
\begin{multline*}
M^{0}=\lbrace (x_1,\dots,x_k)\in X_1\times\dots\times X_k\mid x_1+\dots+x_k=0, \ (x_1,\dots,x_\ell)\text{ is a bad $[\ell]$-tuple},\\
(x_1,\dots,x_{\ell-1})\text{ is not a bad $[\ell-1]$-tuple}\rbrace.
\end{multline*}
By the choice of $I$, the number of $k$-cycles $(x_1,\dots,x_k)$ for which we have chosen $I=[\ell]$ is at least $\frac{1}{2^{k+1}}\delta'N^{k-1}$. For all of these $k$-cycles the $[\ell]$-tuple $(x_1,\dots,x_\ell)$ is bad (because $I=[\ell]$ has been chosen), but the $[\ell-1]$-tuple $(x_1,\dots,x_{\ell-1})$ is not bad (because we chose a minimum size $I$). Hence all these $k$-cycles belong to $M^{0}$, and in particular
$$\vert M^{0}\vert\geq \frac{1}{2^{k+1}}\delta'N^{k-1}.$$

For $j=\ell+2,\dots,k$, let
$$M_j=\lbrace (x_1,\dots,x_k)\in M^{0}\mid (x_1,\dots,x_{\ell-1},x_j) \text{ is not a bad $([\ell-1]\cup\lbrace j\rbrace)$-tuple}\rbrace.$$
We want to apply Lemma \ref{newlemma} to one of these sets $M_j$. First, let us show that some $M_j$ is sufficiently large. For this, let
$$M'=M^{0}\mi(M_{\ell+2}\cup\dots\cup M_k).$$
That is, $M'$ is the collection of all those $(x_1,\dots,x_k)\in X_1\times\dots\times X_k$ with $x_1+\dots+x_k=0$ and such that $(x_1,\dots,x_\ell)$ is a bad $[\ell]$-tuple,  $(x_1,\dots,x_{\ell-1})$ is not a bad $[\ell-1]$-tuple, and
$(x_1,\dots,x_{\ell-1},x_j)$ is a bad $([\ell-1]\cup\lbrace j\rbrace)$-tuple for all $j=\ell+2,\dots,k$. The following lemma states that $M'$ has at most half the size of $M^{0}$, and from this we will conclude that one of the sets $M_j$ must be sufficiently large.

\begin{lem}\label{mprime} We have $\vert M'\vert\leq \frac{1}{2}\vert M^{0}\vert$.
\end{lem}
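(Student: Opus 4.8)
The plan is a double-counting argument: partition both $M^{0}$ and $M'$ according to the prefix $(x_1,\dots,x_\ell)$ of a $k$-cycle, and compare, for each prefix, how many of its extensions lie in $M^0$ versus in $M'$.

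First I would establish an auxiliary estimate. Suppose $(x_1,\dots,x_{\ell-1})$ is an $[\ell-1]$-tuple that is \emph{not} bad. Then for each index $j\in\{\ell+2,\dots,k\}$ I claim the set $B_j=B_j(x_1,\dots,x_{\ell-1})\su X_j$ of all $x_j$ for which $(x_1,\dots,x_{\ell-1},x_j)$ is a bad $([\ell-1]\cup\{j\})$-tuple satisfies $\vert B_j\vert<\alpha N$. To see this, note that $(x_1,\dots,x_{\ell-1},x_j)$ has size $\ell$, so being bad it has at least $2\alpha^{k-\ell-1}N^{k-\ell-1}$ extensions to $k$-cycles in $X_1\times\dots\times X_k$; for distinct $x_j\in B_j$ these sets of extensions are disjoint (they differ in coordinate $j$) and are all extensions of $(x_1,\dots,x_{\ell-1})$ as well, of which there are fewer than $2\alpha^{k-\ell}N^{k-\ell}$ because $(x_1,\dots,x_{\ell-1})$ is not bad; dividing gives $\vert B_j\vert<\alpha N$.

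Next I would let $\mathcal P$ be the set of all $[\ell]$-tuples $(x_1,\dots,x_\ell)$ that are bad and whose restriction $(x_1,\dots,x_{\ell-1})$ is not bad. Every element of $M^0$ (and hence of $M'\su M^0$) has its prefix $(x_1,\dots,x_\ell)$ in $\mathcal P$, so both $M^0$ and $M'$ are partitioned by this prefix. Fixing $P=(x_1,\dots,x_\ell)\in\mathcal P$, on one hand the defining conditions of $M^0$ are already met by $P$ (it is bad, and its restriction is not bad), so \emph{every} extension of $P$ to a $k$-cycle lies in $M^0$, and there are at least $2\alpha^{k-\ell-1}N^{k-\ell-1}=2(\alpha N)^{k-\ell-1}$ of these. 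On the other hand, any extension of $P$ lying in $M'$ must satisfy $x_j\in B_j$ for every $j\in\{\ell+2,\dots,k\}$, and once $x_1,\dots,x_\ell$ and these $k-\ell-1$ coordinates are chosen, the coordinate $x_{\ell+1}$ is forced by $x_1+\dots+x_k=0$. Here the hypothesis $2\le\ell\le k-2$ ensures that $\{\ell+2,\dots,k\}$ is nonempty and that $\ell+1$ is the unique index of $[k]$ outside $[\ell]\cup\{\ell+2,\dots,k\}$, so it stays free to absorb the linear relation. Hence $P$ has at most $\prod_{j=\ell+2}^{k}\vert B_j\vert<(\alpha N)^{k-\ell-1}$ extensions in $M'$.

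Summing these two bounds over all $P\in\mathcal P$ would give $\vert M^0\vert\ge 2(\alpha N)^{k-\ell-1}\vert\mathcal P\vert$ and $\vert M'\vert<(\alpha N)^{k-\ell-1}\vert\mathcal P\vert$, so $\vert M'\vert\le\frac12\vert M^0\vert$ (if $\mathcal P=\emptyset$ then $M^0=\emptyset$ and the bound is trivial). This is a routine Fubini-type count; the two places that will need care are keeping the ``bad'' thresholds at tuple sizes $\ell-1$ and $\ell$ straight in the auxiliary estimate, and noticing that the coordinate $x_{\ell+1}$ is deliberately left out of the constrained indices so that it can absorb the relation $x_1+\dots+x_k=0$ --- which is precisely why the families $M_j$ are indexed starting at $\ell+2$ rather than $\ell+1$. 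I do not expect either to be a genuine obstacle.
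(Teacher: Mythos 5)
Your proof is correct and is essentially the same as the paper's: both partition $M^0$ (and $M'$) by the prefix $(x_1,\dots,x_\ell)$, use the badness of the $[\ell]$-tuple to lower-bound each class's contribution to $M^0$ by $2\alpha^{k-\ell-1}N^{k-\ell-1}$, and use the non-badness of $(x_1,\dots,x_{\ell-1})$ to bound each $\vert B_j\vert$ by $\alpha N$ and hence each class's contribution to $M'$ by $(\alpha N)^{k-\ell-1}$, with $x_{\ell+1}$ absorbing the linear relation. No gaps.
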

\begin{proof} Let us partition the elements $(x_1,\dots,x_k)$ of $M^{0}$ into classes according to the $[\ell]$-tuple $(x_1,\dots,x_\ell)$, so in each class all elements agree in the first $\ell$ entries. Note that by the definition of $M^{0}$, for any such class, these first $\ell$ entries $(x_1,\dots,x_\ell)$ form a bad $[\ell]$-tuple, but $(x_1,\dots,x_{\ell-1})$ is not a bad $[\ell-1]$-tuple. Since $(x_1,\dots,x_\ell)$ is a bad $[\ell]$-tuple, it can be extended to at least $2\alpha^{k-\ell-1}N^{k-\ell-1}$ different $k$-cycles $(x_1,\dots,x_k)$, all of which are elements of $M^{0}$. Hence every (non-empty) class contains at least $2\alpha^{k-\ell-1}N^{k-\ell-1}$ elements of $M^0$.

On the other hand, we can establish an upper bound for the number of elements of $M'$ in each class. For this, fix any bad $[\ell]$-tuple $(x_1,\dots,x_\ell)$, such that the $[\ell-1]$-tuple $(x_1,\dots,x_{\ell-1})$ is not bad. Then $(x_1,\dots,x_{\ell-1})$ can be extended to at most $2\alpha^{k-\ell}N^{k-\ell}$ different $k$-cycles $(x_1,\dots,x_k)\in X_1\times\dots\times X_k$.

For the moment, fix any $j=\ell+2,\dots,k$. For each choice for $x_j\in X_j$ such that $(x_1,\dots,x_{\ell-1},x_j)$ is a bad $([\ell-1]\cup\lbrace j\rbrace)$-tuple, the $([\ell-1]\cup\lbrace j\rbrace)$-tuple $(x_1,\dots,x_{\ell-1},x_j)$ can be extended in at least $2\alpha^{k-\ell-1}N^{k-\ell-1}$ different ways to a $k$-cycle $(x_1,\dots,x_k)\in X_1\times\dots\times X_k$. Each of these is also an extension of the $[\ell-1]$-tuple $(x_1,\dots,x_{\ell-1})$. Since the total number of $k$-cycles extending $(x_1,\dots,x_{\ell-1})$ is at most $2\alpha^{k-\ell}N^{k-\ell}$, this implies that there can be at most
$$\frac{2\alpha^{k-\ell}N^{k-\ell}}{2\alpha^{k-\ell-1}N^{k-\ell-1}}=\alpha N$$
choices for $x_j\in X_j$ such that $(x_1,\dots,x_{\ell-1},x_j)$ is a bad $([\ell-1]\cup\lbrace j\rbrace)$-tuple.

To summarize, if we are given a bad $[\ell]$-tuple $(x_1,\dots,x_\ell)$ with the property that the $[\ell-1]$-tuple $(x_1,\dots,x_{\ell-1})$ is not bad, then for each $j=\ell+2,\dots,k$ there are at most $\alpha N$ choices for $x_j\in X_j$ such that $(x_1,\dots,x_{\ell-1},x_j)$ is a bad $([\ell-1]\cup\lbrace j\rbrace)$-tuple. In particular, there are at most $(\alpha N)^{k-\ell-1}=\alpha^{k-\ell-1}N^{k-\ell-1}$ ways to extend $(x_1,\dots,x_\ell)$ to an element of $M'$ (because after choosing all $x_j\in X_j$ for $j=\ell+2,\dots,k$ there is at most one choice for the remaining element $x_{\ell+1}$ as we need $x_1+\dots+x_k=0$). Thus, each of the partition classes considered above contains at most $\alpha^{k-\ell-1}N^{k-\ell-1}$ elements of $M'$.

All in all, we have partitioned $M^0$ in such a way that each partition class contains at least
$2\alpha^{k-\ell-1}N^{k-\ell-1}$ elements of $M^0$, but at most $\alpha^{k-\ell-1}N^{k-\ell-1}$ elements of $M'$. Thus, $\vert M'\vert\leq \frac{1}{2}\vert M^0\vert$ as desired.
\end{proof}

By Lemma \ref{mprime} we have
$$\vert M_{\ell+2}\cup\dots\cup M_k\vert=\vert M^0\vert-\vert M'\vert\geq \frac{1}{2}\vert M^0\vert\geq \frac{1}{2^{k+2}}\delta'N^{k-1}.$$
Hence there is some $j\in \lbrace \ell+2,\dots,k\rbrace$ with $\vert M_j\vert\geq \frac{1}{k2^{k+2}}\delta'N^{k-1}$. Upon relabeling the indices $\ell+2,\dots,k$ we can assume without loss of generality that $j=k$. Hence
$$\vert M_k\vert\geq \frac{1}{k2^{k+2}}\delta'N^{k-1}.$$
Our goal is to apply Lemma \ref{newlemma} to the collection $M_k$ together with the index sets $I_1=\lbrace \ell,\dots,k-1\rbrace$ and $I_2=[\ell-1]\cup\lbrace k\rbrace$. Recall that
$$M_k=\lbrace (x_1,\dots,x_k)\in M^{0}\mid (x_1,\dots,x_{\ell-1},x_k) \text{ is not a bad $([\ell-1]\cup\lbrace k\rbrace)$-tuple}\rbrace.$$
That is, $M_k$ is the collection of all those $(x_1,\dots,x_k)\in X_1\times\dots\times X_k$ with $x_1+\dots+x_k=0$ and such that $(x_1,\dots,x_\ell)$ is a bad $[\ell]$-tuple,  $(x_1,\dots,x_{\ell-1})$ is not a bad $[\ell-1]$-tuple, and
$(x_1,\dots,x_{\ell-1},x_k)$ is not a bad $([\ell-1]\cup\lbrace k\rbrace)$-tuple.

\begin{lem}\label{extensionstomk} For every $(x_\ell,\dots,x_{k-1})\in X_\ell\times\dots\times X_{k-1}$, there are at most $\frac{1}{2}\alpha^{\ell-1}N^{\ell-1}$ different ways to extend $(x_\ell,\dots,x_{k-1})$ to a $k$-cycle $(x_1,\dots,x_k)\in M_k$.
\end{lem}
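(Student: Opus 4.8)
The plan is as follows. Fix the $I_1$-tuple $(x_\ell,\dots,x_{k-1})\in X_\ell\times\dots\times X_{k-1}$, and write $x_\ell$ for its (fixed) $\ell$-th coordinate. The first observation is that a $k$-cycle $(x_1,\dots,x_k)\in X_1\times\dots\times X_k$ extending this $I_1$-tuple is uniquely determined by its initial segment $(x_1,\dots,x_{\ell-1})$: once $x_1,\dots,x_{\ell-1}$ are chosen, the coordinate $x_k$ is forced to equal $-(x_1+\dots+x_{k-1})$. Hence the number of extensions of $(x_\ell,\dots,x_{k-1})$ to a $k$-cycle in $M_k$ is at most the number of tuples $(x_1,\dots,x_{\ell-1})\in X_1\times\dots\times X_{\ell-1}$ such that $(x_1,\dots,x_{\ell-1},x_\ell)$ is a bad $[\ell]$-tuple. (This is the only property of membership in $M_k$ that I would use, via $M_k\su M^{0}$; the other two conditions in the definition of $M_k$ only shrink the relevant set further.) So the whole lemma reduces to the claim that, for our fixed $x_\ell\in X_\ell$, there are at most $\tfrac12\alpha^{\ell-1}N^{\ell-1}$ bad $[\ell]$-tuples whose $\ell$-th coordinate equals $x_\ell$.

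To prove this claim I would double-count the $k$-cycles in $X_1\times\dots\times X_k$ whose $\ell$-th coordinate is $x_\ell$. On the one hand, each bad $[\ell]$-tuple of the form $(x_1,\dots,x_{\ell-1},x_\ell)$ extends, by the definition of "bad" (with $|I|=\ell$), to at least $2\alpha^{k-\ell-1}N^{k-\ell-1}$ such $k$-cycles, and the families of $k$-cycles arising this way from distinct bad $[\ell]$-tuples are pairwise disjoint (a $k$-cycle extends only the $[\ell]$-tuple formed by its own first $\ell$ coordinates). On the other hand, by the hypothesis of Proposition~\ref{propo3} applied with $i=\ell$, there are at most $\theta\delta'N^{k-2}$ $k$-cycles with $\ell$-th coordinate $x_\ell$. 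Comparing the two bounds, the number of bad $[\ell]$-tuples with $\ell$-th coordinate $x_\ell$ is at most $\theta\delta'N^{k-2}/\bigl(2\alpha^{k-\ell-1}N^{k-\ell-1}\bigr)$, which equals $\tfrac12\alpha^{\ell-1}N^{\ell-1}$ since $\theta\delta'=\alpha^{k-2}$.

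I do not expect a genuine obstacle: the argument is a single counting estimate. The only points to be careful about are the exponent bookkeeping (the threshold defining a bad $I$-tuple for $|I|=\ell$ is $2\alpha^{k-\ell-1}N^{k-\ell-1}$, and the cancellation $\alpha^{k-2}/\alpha^{k-\ell-1}=\alpha^{\ell-1}$), and the fact that $\ell$ ranges over $2\le\ell\le k-2$, so that all the tuple sizes appearing in the argument are legitimate.
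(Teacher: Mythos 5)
Your proposal is correct and follows essentially the same route as the paper: bound the number of bad $[\ell]$-tuples extending the fixed $x_\ell$ by dividing the upper bound $\theta\delta'N^{k-2}=\alpha^{k-2}N^{k-2}$ on $k$-cycles through $x_\ell$ by the lower bound $2\alpha^{k-\ell-1}N^{k-\ell-1}$ on extensions of each bad $[\ell]$-tuple, then observe that with $(x_\ell,\dots,x_{k-1})$ fixed each such bad tuple yields at most one element of $M_k$ since $x_k$ is determined. The bookkeeping and the use of $M_k\su M^0$ are exactly as in the paper.
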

\begin{proof} First, fix any $x_{\ell }\in X_{\ell }$. Recall that $x_{\ell }$ can be extended to at most
$$\theta\delta'N^{k-2}=\alpha^{k-2}N^{k-2}$$
different $k$-cycles $(x_{1},\dots ,x_{k})\in X_{1}\times \dots
\times X_{k}$. Any bad $[\ell ]$-tuple $(x_{1},\dots ,x_{\ell })$ can
be extended to at least $2\alpha ^{k-\ell -1}N^{k-\ell -1}$ different
$k$-cycles $(x_{1},\dots ,x_{k})$. Hence there are at most
$$\frac{\alpha^{k-2}N^{k-2}}{2\alpha^{k-\ell-1}N^{k-\ell-1}}=\frac{1}{2}\alpha^{\ell-1}N^{\ell-1}$$
ways to extend $x_{\ell }$ to a bad $[\ell ]$-tuple $(x_{1},\dots ,x
_{\ell })$.

Now, fix any $(x_{\ell },\dots ,x_{k-1})\in X_{\ell }\times \dots \times X
_{k-1}$. Each of the at most $\frac{1}{2}\alpha ^{\ell -1}N^{\ell -1}$ extensions of $x_{\ell }$ to a bad $[\ell ]$-tuple $(x_{1},\dots ,x_{\ell })$ gives at most one
possibility for $(x_{1},\dots ,x_{k})\in M_{k}$, because $x_{\ell },
\dots , x_{k-1}$ were fixed and the remaining element $x_{k}$ is already
determined by $x_{1}+\dots +x_{k}=0$.
\end{proof}

Now we will apply Lemma \ref{newlemma} to $M=M_k$ and $r=\frac{1}{k2^{k+2}}$. Indeed, $\vert M_k\vert\geq \frac{1}{k2^{k+2}}\delta'N^{k-1}=r\delta'N^{k-1}$. Let $I_1= \lbrace \ell,\dots, k-1\rbrace$ and $I_2=[\ell-1]\cup \lbrace k\rbrace$. Then $I_1,I_2\su [k]$ are disjoint and $[k]=I_1\cup I_2$. Furthermore $\vert I_1\vert=k-\ell$ and $\vert I_2\vert=\ell$, so since $2\leq \ell\leq k-2$, we have $2\leq \vert I_1\vert\leq k-2$ and $2\leq \vert I_2\vert\leq k-2$. It remains to check (i) and (ii):
\begin{itemize}
\item[(i)] Let $(x_\ell,\dots,x_{k-1})\in X_\ell\times\dots\times X_{k-1}$. By Lemma \ref{extensionstomk} there are at most
$$\frac{1}{2}\alpha^{\ell-1}N^{\ell-1}<2\alpha^{\ell-1}N^{\ell-1}=2\alpha^{k-\vert I_1\vert-1}N^{k-\vert I_1\vert-1}$$
different ways to extend $(x_\ell,\dots,x_{k-1})$ to a $k$-cycle $(x_1,\dots,x_k)\in M_k$.
\item[(ii)] By definition of $M_k$, for any $(x_1,\dots,x_k)\in M_k$, the $([\ell-1]\cup\lbrace k\rbrace)$-tuple $(x_1,\dots,x_{\ell-1},x_k)$ is not a bad $([\ell-1]\cup\lbrace k\rbrace)$-tuple.
\end{itemize}
Thus, all assumptions are satisfied and Lemma \ref{newlemma} yields (as $k\geq 4$)
$$\delta'\theta^{C_{p,k}}\geq \left(\frac{r}{4k}\right)^{C_{p,k}}=\left(\frac{1}{4k^2 2^{k+2}}\right)^{C_{p,k}}=\left(\frac{1}{k^2 2^{k+4}}\right)^{C_{p,k}}\geq \left(\frac{1}{(2^k)^2 2^{k+k}}\right)^{C_{p,k}}=2^{-4kC_{p,k}}.$$
This finishes the proof of Proposition \ref{propo3}.

\section{Proof of Lemma \ref{newlemma}}\label{sect4}

Recall that we are operating under the assumptions of Proposition \ref{propo3}, in particular, $k\geq 4$ and we assume that Theorem \ref{theo2} is true for all smaller values of $k$.

Let $\ell=\vert I_1\vert$, so $2\leq \ell\leq k-2$. Upon relabeling the indices we can assume without loss of generality that $I_1=[\ell]=\lbrace 1,\dots, \ell\rbrace$ and $I_2=\lbrace \ell+1,\dots,k\rbrace$. Note that $\vert I_2\vert=k-\ell$. Furthermore, let
$$Y=\lbrace y\in \Fpn\mid x_1+\dots+x_\ell=-y\text{ has at most }2\alpha^{\ell-1}N^{\ell-1}\text{ solutions with }(x_1,\dots,x_\ell)\in X_1\times\dots\times X_\ell\rbrace.$$
The basic idea of the proof is to consider $(\ell+1)$-cycles in $X_1\times\dots\times X_\ell\times Y$. First, we will show that every $k$-cycle $(x_1,\dots,x_k)\in M$ gives an $(\ell+1)$-cycle $(x_1,\dots,x_\ell,x_{\ell+1}+\dots+x_k)\in X_1\times\dots\times X_\ell\times Y$ by summing up the last $k-\ell$ coordinates. Afterwards, we will prove that there must be a large collection of disjoint $(\ell+1)$-cycles in $X_1\times\dots\times X_\ell\times Y$. We can then apply the hypothesis that Theorem \ref{theo2} holds for $\ell+1<k$, and derive the desired inequality.

\begin{claim}\label{claim31} Let $(x_{\ell+1},\dots,x_k)\in X_{\ell+1}\times\dots\times X_k$ be any $I_2$-tuple. If $x_{\ell+1}+\dots+x_k\not\in Y$, then the $I_2$-tuple $(x_{\ell+1},\dots,x_k)$ is bad.
\end{claim}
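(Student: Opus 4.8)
The plan is to unfold the definition of $Y$ and use assumption (ii) of Lemma~\ref{newlemma}. Let $(x_{\ell+1},\dots,x_k)\in X_{\ell+1}\times\dots\times X_k$ be an $I_2$-tuple with $y := x_{\ell+1}+\dots+x_k\notin Y$. By the definition of $Y$, the equation $x_1+\dots+x_\ell = -y$ has \emph{more than} $2\alpha^{\ell-1}N^{\ell-1}$ solutions $(x_1,\dots,x_\ell)\in X_1\times\dots\times X_\ell$; that is, there are more than $2\alpha^{\ell-1}N^{\ell-1}$ tuples $(x_1,\dots,x_\ell)\in X_1\times\dots\times X_\ell$ with $x_1+\dots+x_\ell+y = 0$.

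The key observation is that each such tuple $(x_1,\dots,x_\ell)$ yields, together with the fixed $(x_{\ell+1},\dots,x_k)$, a genuine $k$-cycle $(x_1,\dots,x_k)\in X_1\times\dots\times X_k$: indeed $x_1+\dots+x_k = (x_1+\dots+x_\ell) + (x_{\ell+1}+\dots+x_k) = (x_1+\dots+x_\ell) + y = 0$. Distinct tuples $(x_1,\dots,x_\ell)$ give distinct such $k$-cycles since they differ in one of the first $\ell$ coordinates. Hence the $I_2$-tuple $(x_{\ell+1},\dots,x_k)$ can be extended to a $k$-cycle in $X_1\times\dots\times X_k$ in more than $2\alpha^{\ell-1}N^{\ell-1}$ ways.

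It remains to match this against the badness threshold for $I_2$-tuples. Since $\lvert I_2\rvert = k-\ell$, the definition of a bad $I_2$-tuple requires at least $2\alpha^{k-\lvert I_2\rvert-1}N^{k-\lvert I_2\rvert-1} = 2\alpha^{\ell-1}N^{\ell-1}$ extensions to a $k$-cycle. We have just exhibited strictly more than $2\alpha^{\ell-1}N^{\ell-1}$ such extensions, so in particular at least that many, and therefore $(x_{\ell+1},\dots,x_k)$ is a bad $I_2$-tuple, as claimed.

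This argument is entirely routine; there is no real obstacle, only bookkeeping with the exponents. The one point to be careful about is the direction of the inequality in the definition of $Y$ (``at most $2\alpha^{\ell-1}N^{\ell-1}$ solutions''), so that its complement gives strictly more than that many solutions, comfortably meeting the ``at least'' requirement in the definition of badness. The real use of this claim comes later: combined with assumption (ii) of Lemma~\ref{newlemma} (no $k$-cycle in $M$ has a bad $I_2$-tuple), its contrapositive shows that every $(x_1,\dots,x_k)\in M$ satisfies $x_{\ell+1}+\dots+x_k\in Y$, so that summing the last $k-\ell$ coordinates does indeed land in $X_1\times\dots\times X_\ell\times Y$.
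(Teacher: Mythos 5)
Your proof is correct and follows the same route as the paper's: unfold the definition of $Y$ to get more than $2\alpha^{\ell-1}N^{\ell-1}$ solutions of $x_1+\dots+x_\ell=-y$, observe each solution extends the $I_2$-tuple to a $k$-cycle, and check that $2\alpha^{k-\vert I_2\vert-1}N^{k-\vert I_2\vert-1}=2\alpha^{\ell-1}N^{\ell-1}$ is exactly the badness threshold. Your extra care with the strict versus non-strict inequality is fine but not needed, since ``more than'' implies ``at least.''
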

\begin{proof} If $x_{\ell+1}+\dots+x_k\not\in Y$, then there are at least $2\alpha^{\ell-1}N^{\ell-1}$ choices for $(x_1,\dots,x_\ell)\in X_1\times\dots\times X_\ell$ such that 
$x_1+\dots+x_\ell=-(x_{\ell+1}+\dots+x_k)$. For each such choice we obtain a $k$-cycle $(x_1,\dots,x_k)\in X_1\times \dots\times X_k$ extending the $I_2$-tuple $(x_{\ell+1},\dots,x_k)$. Hence the $I_2$-tuple $(x_{\ell+1},\dots,x_k)$ is bad.
\end{proof}

\begin{claim}\label{claim32} For every $(x_1,\dots,x_k)\in M$ we have $x_{\ell+1}+\dots+x_k\in Y$.\end{claim}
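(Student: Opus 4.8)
The plan is to combine Claim \ref{claim31} with hypothesis (ii) of Lemma \ref{newlemma}; no additional work is needed. Fix any $k$-cycle $(x_1,\dots,x_k)\in M$. By hypothesis (ii) of Lemma \ref{newlemma}, the $I_2$-tuple $(x_{\ell+1},\dots,x_k)$ is not a bad $I_2$-tuple. On the other hand, Claim \ref{claim31} asserts that for an arbitrary $I_2$-tuple $(x_{\ell+1},\dots,x_k)\in X_{\ell+1}\times\dots\times X_k$, if $x_{\ell+1}+\dots+x_k\notin Y$ then $(x_{\ell+1},\dots,x_k)$ must be bad. Reading this contrapositively: since the $I_2$-tuple $(x_{\ell+1},\dots,x_k)$ coming from our element of $M$ is \emph{not} bad, we must have $x_{\ell+1}+\dots+x_k\in Y$. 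This is exactly the assertion of the claim.

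There is essentially no obstacle here: the statement is an immediate corollary of the preceding claim together with a hypothesis of the lemma. The only point worth noting is that hypothesis (ii) of Lemma \ref{newlemma} is phrased precisely so as to exclude bad $I_2$-tuples among the elements of $M$, which is exactly the input required to invoke the contrapositive of Claim \ref{claim31}. Conceptually, this claim is the bridge that will subsequently allow us to send each $k$-cycle $(x_1,\dots,x_k)\in M$ to an $(\ell+1)$-cycle $(x_1,\dots,x_\ell, x_{\ell+1}+\dots+x_k)\in X_1\times\dots\times X_\ell\times Y$ by collapsing the last $k-\ell$ coordinates into their sum, and then to apply the inductive hypothesis that Theorem \ref{theo2} holds for $\ell+1<k$.
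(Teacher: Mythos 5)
Your proof is correct and matches the paper's argument exactly: the paper also derives Claim \ref{claim32} as an immediate consequence of the contrapositive of Claim \ref{claim31} together with assumption (ii) of Lemma \ref{newlemma}. Nothing is missing.
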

\begin{proof}This follows immediately from Claim \ref{claim31} and assumption (ii).\end{proof}

Thus, every $k$-cycle $(x_1,\dots,x_k)\in M$ gives an $(\ell+1)$-cycle $(x_1,\dots,x_\ell,x_{\ell+1}+\dots+x_k)\in X_1\times\dots\times X_\ell\times Y$. The following claim will be a useful tool for proving that there is a large collection of disjoint $(\ell+1)$-cycles in $X_1\times\dots\times X_\ell\times Y$.

\begin{claim}\label{claim33} For every $y\in Y$, the number of $k$-cycles $(x_1,\dots,x_k)\in M$ with $x_{\ell+1}+\dots+x_k=y$ is at most $4\theta\delta'N^{k-2}$.\end{claim}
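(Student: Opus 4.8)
\textbf{Proof proposal for Claim \ref{claim33}.} The plan is a two-step counting argument: first bound the number of possible prefixes $(x_1,\dots,x_\ell)$ using the defining property of $Y$, then bound the number of extensions of each prefix using hypothesis (i) of Lemma \ref{newlemma}.

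First I would fix $y\in Y$ and consider any $k$-cycle $(x_1,\dots,x_k)\in M$ with $x_{\ell+1}+\dots+x_k=y$. Since $(x_1,\dots,x_k)\in M\su X_1\times\dots\times X_k$ and $x_1+\dots+x_k=0$, the prefix $(x_1,\dots,x_\ell)\in X_1\times\dots\times X_\ell$ must satisfy $x_1+\dots+x_\ell=-y$. By the definition of $Y$, the equation $x_1+\dots+x_\ell=-y$ has at most $2\alpha^{\ell-1}N^{\ell-1}$ solutions with $(x_1,\dots,x_\ell)\in X_1\times\dots\times X_\ell$, so there are at most $2\alpha^{\ell-1}N^{\ell-1}$ possibilities for the prefix $(x_1,\dots,x_\ell)$.

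Next, for each such prefix, which is an $I_1$-tuple since $I_1=[\ell]$, hypothesis (i) of Lemma \ref{newlemma} says it can be extended in at most $2\alpha^{k-|I_1|-1}N^{k-|I_1|-1}=2\alpha^{k-\ell-1}N^{k-\ell-1}$ ways to a $k$-cycle in $M$, hence in particular in at most that many ways to a $k$-cycle in $M$ with $x_{\ell+1}+\dots+x_k=y$. Multiplying the two bounds and using $\alpha^{k-2}=\theta\delta'$, the total number of such $k$-cycles is at most
$$2\alpha^{\ell-1}N^{\ell-1}\cdot 2\alpha^{k-\ell-1}N^{k-\ell-1}=4\alpha^{k-2}N^{k-2}=4\theta\delta'N^{k-2},$$
as claimed. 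There is no serious obstacle here; the only point requiring care is tracking the exponents of $\alpha$ and $N$ and invoking the relation $\theta\delta'=\alpha^{k-2}$ from the start of Section \ref{sect2}.
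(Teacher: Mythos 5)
Your proposal is correct and is essentially identical to the paper's own proof: both use the definition of $Y$ to bound the number of prefixes $(x_1,\dots,x_\ell)$ with $x_1+\dots+x_\ell=-y$ by $2\alpha^{\ell-1}N^{\ell-1}$, then apply hypothesis (i) to bound the extensions of each prefix within $M$ by $2\alpha^{k-\ell-1}N^{k-\ell-1}$, and multiply. Nothing is missing.
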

\begin{proof}Note that for any such $k$-cycle $(x_1,\dots,x_k)\in M$ we have
$$x_1+\dots+x_\ell=-(x_{\ell+1}+\dots+x_k)=-y.$$
As $y\in Y$ there are at most $2\alpha^{\ell-1}N^{\ell-1}$ choices for $(x_1,\dots,x_\ell)\in X_1\times\dots\times X_\ell$ with $x_1+\dots+x_\ell=-y$. For each such choice for $(x_1,\dots,x_\ell)$, by assumption (i) there are at most $2\alpha^{k-\ell-1}N^{k-\ell-1}$ ways to extend $(x_1,\dots,x_\ell)$ to a $k$-cycle $(x_1,\dots,x_k)\in M$. So all in all there are at most
$$2\alpha^{\ell-1}N^{\ell-1}\cdot 2\alpha^{k-\ell-1}N^{k-\ell-1}=4\alpha^{k-2}N^{k-2}=4\theta\delta'N^{k-2}$$
choices for $(x_1,\dots,x_k)\in M$ with $x_{\ell+1}+\dots+x_k=y$.
\end{proof}

We will now prove that there is a large collection of disjoint $(\ell+1)$-cycles in $X_1\times\dots\times X_\ell\times Y$. Let us choose a maximal collection of disjoint $(\ell+1)$-cycles in $X_1\times\dots\times X_\ell\times Y$, and let the number of $(\ell+1)$-cycles in our collection be $t$. Furthermore, let $X_1'\su X_1$, \dots, $X_\ell'\su X_\ell$ and $Y'\su Y$ consist of the elements involved in the $t$ disjoint $(\ell+1)$-cycles, then $\vert X_1'\vert=\vert X_2'\vert=\dots=\vert X_\ell'\vert=\vert Y'\vert=t$. Since the collection is maximal, every $(\ell+1)$-cycle $(x_1,\dots,x_\ell,y)\in X_1\times\dots\times X_\ell\times Y$ satisfies $y\in Y'$ or $x_i\in X_i'$ for some $i\in \lbrace 1,\dots,\ell\rbrace$.

\begin{claim}\label{t-value} We have $t\geq \frac{r}{2k\theta}N$.
\end{claim}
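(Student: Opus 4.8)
The plan is to establish Claim \ref{t-value} by a double-counting argument that bounds $\vert M\vert$ from above in terms of $t$. Recall that, by Claim \ref{claim32}, every $k$-cycle $(x_1,\dots,x_k)\in M$ gives rise to an $(\ell+1)$-cycle $(x_1,\dots,x_\ell,\,x_{\ell+1}+\dots+x_k)$ in $X_1\times\dots\times X_\ell\times Y$. By the maximality of the chosen collection of $t$ disjoint $(\ell+1)$-cycles, this associated $(\ell+1)$-cycle must use a point from $X_i'$ in some coordinate $i\in[\ell]$, or else satisfy $x_{\ell+1}+\dots+x_k\in Y'$. So I would split the $k$-cycles in $M$ into these two (possibly overlapping) types and bound the number of each; since I only need an upper bound on $\vert M\vert$, the overlap is harmless.

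For the first type, I would fix $i\in[\ell]$ and a point $x\in X_i'$, and use the hypothesis of Proposition \ref{propo3}: the point $x$ occurs as the $i$-th coordinate of at most $\theta\delta'N^{k-2}$ $k$-cycles in $X_1\times\dots\times X_k$, hence of at most that many members of $M$. As $\vert X_i'\vert=t$, summing over the $t$ points of $X_i'$ and over the $\ell$ coordinates $i\in[\ell]$ shows there are at most $\ell\,t\,\theta\delta'N^{k-2}$ $k$-cycles in $M$ of the first type. For the second type, I would fix $y\in Y'$ and apply Claim \ref{claim33}, which bounds by $4\theta\delta'N^{k-2}$ the number of $k$-cycles $(x_1,\dots,x_k)\in M$ with $x_{\ell+1}+\dots+x_k=y$; since $\vert Y'\vert=t$, this gives at most $4\,t\,\theta\delta'N^{k-2}$ $k$-cycles of the second type.

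Adding the two bounds yields $\vert M\vert\le(\ell+4)\,t\,\theta\delta'N^{k-2}\le 2k\,t\,\theta\delta'N^{k-2}$, using $\ell\le k-2$ (so that $\ell+4\le k+2\le 2k$ since $k\ge 2$). Combining with the hypothesis $\vert M\vert\ge r\delta'N^{k-1}$ of the lemma and rearranging gives $t\ge\frac{r}{2k\theta}N$, which is exactly Claim \ref{t-value}. I do not anticipate any serious obstacle here: the real work has already been done in Claims \ref{claim32} and \ref{claim33} and in the degree hypothesis of Proposition \ref{propo3}, and what remains is just to assemble these facts correctly, the only point requiring care being that the two families of $k$-cycles need not be disjoint (which does not matter, as we only want an upper bound).
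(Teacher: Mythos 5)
Your argument is correct and is essentially identical to the paper's proof: the same use of maximality to force each cycle of $M$ to hit some $X_i'$ or have its tail sum in $Y'$, the same degree bound $\theta\delta'N^{k-2}$ per point of $X_i'$, the same appeal to Claim \ref{claim33} for $Y'$, and the same final estimate $\vert M\vert\leq(\ell+4)t\theta\delta'N^{k-2}\leq 2kt\theta\delta'N^{k-2}$. No issues.
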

\begin{proof} Recall that for each $k$-cycle $(x_1,\dots,x_k)\in M$ we obtain an $(\ell+1)$-cycle $(x_1,\dots,x_\ell,y)\in X_1\times\dots\times X_\ell\times Y$ by taking $y=x_{\ell+1}+\dots+x_k\in Y$ (see Claim \ref{claim32}). Hence for each $k$-cycle $(x_1,\dots,x_{k})\in M$ we must have $x_{\ell+1}+\dots+x_{k}\in Y'$ or $x_i\in X_i'$ for some $i\in \lbrace 1,\dots,\ell\rbrace$. Note that by the assumptions of Proposition \ref{propo3}, for each $i=1,\dots,\ell$, at most $\vert X_i'\vert\cdot \theta\delta'N^{k-2}=t\theta\delta'N^{k-2}$ different $k$-cycles $(x_1,\dots,x_{k})\in M$ can satisfy $x_i\in X_i'$. Furthermore, for each $y\in Y'$ we have $y\in Y$ and therefore by Claim \ref{claim33} there are at most $4\theta\delta'N^{k-2}$ different $k$-cycles $(x_1,\dots,x_k)\in M$ with $x_{\ell+1}+\dots+x_{k}=y$. Thus, at most $\vert Y'\vert\cdot 4\theta\delta'N^{k-2}=4t\theta\delta'N^{k-2}$ different $k$-cycles $(x_1,\dots,x_{k})\in M$ satisfy $x_{\ell+1}+\dots+x_{k}\in Y'$. All in all we obtain, using $\ell\leq k-2$ and $k\geq 4$,
\[r\delta'N^{k-1}\leq \vert M\vert\leq \ell\cdot t\theta\delta'N^{k-2}+4t\theta\delta'N^{k-2}=(\ell+4)t\theta\delta'N^{k-2}\leq 2kt\theta\delta'N^{k-2}.\]
Thus, indeed $t\geq \frac{r}{2k\theta}N$.
\end{proof}

We assumed that Theorem \ref{theo2} holds for $\ell+1<k$ and we found a collection of at least $t$ disjoint $(\ell+1)$-cycles in $X_1'\times \dots\times X_\ell'\times Y'$. So by Theorem \ref{theo2} for $\ell+1$, the total number of $(\ell+1)$-cycles in $X_1'\times \dots\times X_\ell'\times Y'$ is at least
$$\left(\frac{t}{N}\right)^{C_{p,\ell+1}} N^{\ell}.$$

On the other hand, by the definition of $Y$, for each $y\in Y'\su Y$ there are at most $2\alpha^{\ell-1}N^{\ell-1}$ different solutions for $x_1+\dots+x_\ell=-y$ with $(x_1,\dots,x_\ell)\in X_1\times\dots\times X_\ell$. In particular, each $y\in Y'$ can be extended to at most $2\alpha^{\ell-1}N^{\ell-1}$ different $(\ell+1)$-cycles $(x_1,\dots,x_\ell,y)\in X_1'\times\dots\times X_\ell'\times Y'$. Thus, the total number of $(\ell+1)$-cycles in $X_1'\times \dots\times X_\ell'\times Y'$ is at most
$$\vert Y'\vert \cdot 2\alpha^{\ell-1}N^{\ell-1}=2t\alpha^{\ell-1}N^{\ell-1}.$$
So we obtain
$$\left(\frac{t}{N}\right)^{C_{p,\ell+1}} N^{\ell}\leq 2t\alpha^{\ell-1}N^{\ell-1},$$
and therefore together with Claim \ref{t-value}
$$2\alpha^{\ell-1}\geq \left(\frac{t}{N}\right)^{C_{p,\ell+1}-1}\geq \left(\frac{r}{2k\theta}\right)^{C_{p,\ell+1}-1}.$$
By $C_{p,3}>2$ and $\ell\geq 2$ we have $C_{p,\ell+1}=(\ell-1)(C_{p,3}-1)+1>2$. Thus, we obtain
$$\alpha^{\ell-1}\geq \frac{1}{2}\left(\frac{r}{2k\theta}\right)^{C_{p,\ell+1}-1}\geq \left(\frac{r}{4k\theta}\right)^{C_{p,\ell+1}-1}=\left(\frac{r}{4k\theta}\right)^{(\ell-1)(C_{p,3}-1)}.$$
Taking this to the $\frac{k-2}{\ell-1}$-th power gives
$$\theta\delta'=\alpha^{k-2}\geq \left(\frac{r}{4k\theta}\right)^{(k-2)(C_{p,3}-1)}=\left(\frac{r}{4k\theta}\right)^{C_{p,k}-1}.$$
Now rearranging yields
$$\delta'\theta^{C_{p,k}}\geq \left(\frac{r}{4k}\right)^{C_{p,k}-1}\geq \left(\frac{r}{4k}\right)^{C_{p,k}},$$
as desired. This finishes the proof of Lemma \ref{newlemma}.

\section{Proof of Theorem \ref{theo1} and Theorem \ref{theo2}}\label{sect5}

We next prove Theorem \ref{theo2} by induction on $k$. The base case $k=3$ is \cite[Theorem 3]{foxlovasz}. Let us therefore fix some $k\geq 4$ and assume that we have proved Theorem \ref{theo2} for all smaller values of $k$. Let us also fix the prime $p$, and to simplify notation, set $C=C_{p,k}$.
For the fixed value of $k$, we 
will first prove a version of Theorem \ref{theo2} with a slightly weaker bound, see Proposition \ref{theo2-approx} below. Afterwards, we will use a power trick to obtain the actual statement of Theorem \ref{theo2}.

Fix a sufficiently small real number $0<t_{p,k}<\frac{1}{2}$ such that
$$t^{C}(\log(1/t))^{2C}< 2^{-6kC}C^{-2C}$$
for all $0<t<t_{p,k}$ and such that the function $t(\log(1/t))^{2C}$ is monotonically increasing on the interval $(0,t_{p,k})$.

\begin{propo}\label{theo2-approx} Let $0<\eps<t_{p,k}$ and $X_1,\dots,X_k\su \Fpn$ be such that there is a collection of at least $\eps N$ disjoint $k$-cycles in $X_1\times \dots\times X_k$. Then the total number of $k$-cycles in $X_1\times \dots\times X_k$ is at least
$$2^{-6kC}C^{-2C}\frac{\eps^{C}}{\log(1/\eps)^{2C}}N^{k-1}.$$
\end{propo}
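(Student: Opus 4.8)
The plan is to prove Proposition \ref{theo2-approx} by a greedy deletion argument that reduces to Proposition \ref{propo3}. Given $X_1,\dots,X_k \su \Fpn$ with a collection of at least $\eps N$ disjoint $k$-cycles, we run the following cleaning process: as long as there exists some index $i$ and some point of $X_i$ occurring as $x_i$ in more than a $\theta$-multiple of the ``average'' number of $k$-cycles (where $\theta$ will be chosen roughly $1/\eps$ up to logarithmic factors), we delete that point from $X_i$. Each deletion removes relatively many $k$-cycles, so the process terminates; the key is to track two quantities through the process: (a) the number of deleted points from each $X_i$, and (b) the number of disjoint $k$-cycles remaining. We want to show that we never delete too many points, so that the original collection of $\eps N$ disjoint $k$-cycles still has at least, say, $\frac{\eps}{2}N$ members surviving; this keeps the cleaned sets $X_1',\dots,X_k'$ in a regime where Proposition \ref{propo3} applies with $\delta' = (\text{number of } k\text{-cycles in }X_1'\times\dots\times X_k')/N^{k-1}$ and with the chosen $\theta$, yielding $\delta'\theta^{C}\geq 2^{-4kC}$, hence a lower bound on $\delta'$. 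Since deleting points only decreases the count of $k$-cycles, the original $X_1\times\dots\times X_k$ has at least $\delta' N^{k-1}$ many $k$-cycles, and we optimize the choice of $\theta$ to get the stated bound.

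First I would set up the bookkeeping. Let $m$ denote the current number of $k$-cycles divided by $N^{k-1}$ at some stage of the process; the deletion criterion should be phrased so that whenever a point of $X_i$ lies in more than $\theta m N^{k-2}$ of the current $k$-cycles we delete it, but we must be careful because $m$ changes. A cleaner approach: fix a target threshold $\tau$ in advance, delete any point lying in more than $\tau N^{k-2}$ current $k$-cycles, and iterate. Each deletion removes more than $\tau N^{k-2}$ $k$-cycles, and a $k$-cycle being destroyed can destroy at most one disjoint cycle from our reference collection per deleted point; more usefully, the total number of deleted points across all $X_i$ is at most (initial number of $k$-cycles)$/(\tau N^{k-2}) \le m_0 N / \tau$ where $m_0 N^{k-1}$ is the initial count. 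We have $m_0 \le 1$ trivially, but we also need an \emph{upper} bound on $m_0$ that is not too wasteful, or rather we need the number of deleted points per set to be less than $\frac{\eps}{2} N$. This forces $\tau$ to be chosen so that $m_0 N / \tau < \frac{\eps}{2} N \cdot k$, i.e.\ $\tau > 2 k m_0 / \eps$. Since a priori $m_0$ could be close to $1$, we should instead bound the number of deleted points from $X_i$ specifically — each deleted point destroys cycles using that coordinate, and disjoint cycles use distinct points in coordinate $i$ — to get that fewer than $\frac{\eps}{2}N$ points are deleted from each $X_i$ as long as $\tau N^{k-2} > $ (something like) $\frac{2}{\eps}\cdot(\text{cycles per surviving disjoint cycle})$. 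This is where one must be slightly delicate; I expect the right choice is $\tau$ of order $\frac{1}{\eps}\cdot m_{\mathrm{final}}$ where $m_{\mathrm{final}}$ is the final density, leading to a self-referential inequality that one unwinds.

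After cleaning, let $X_1',\dots,X_k'$ be the resulting sets, let $\delta' N^{k-1}$ be the number of $k$-cycles they contain (which is positive, since at least $\frac{\eps}{2}N$ disjoint cycles survive, so $\delta' \ge \eps/2$), and note that by construction every point of $X_i'$ lies in at most $\tau N^{k-2} \le \theta \delta' N^{k-2}$ $k$-cycles for $\theta = \tau/\delta'$. Applying Proposition \ref{propo3} gives $\delta'\theta^{C}\geq 2^{-4kC}$, i.e.\ $\delta' \cdot (\tau/\delta')^{C} \ge 2^{-4kC}$, so $\delta'^{C-1} \ge 2^{-4kC}\tau^{-C}$, hence $\delta' \ge (2^{-4kC}\tau^{-C})^{1/(C-1)}$. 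Combining with $\tau$ chosen as a small multiple of $1/\eps$ times a logarithmic correction (the correction is needed precisely because $\delta'$ itself enters $\theta$, and iterating the bound once produces the $\log(1/\eps)$ factors; this is the ``power trick''-like manipulation hinted at, and it explains the $(\log(1/\eps))^{2C}$ denominator and the $C^{-2C}$, $2^{-6kC}$ constants in the statement), we get the number of $k$-cycles in $X_1\times\dots\times X_k \ge \delta' N^{k-1} \ge 2^{-6kC}C^{-2C}\eps^{C}(\log(1/\eps))^{-2C}N^{k-1}$.

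The main obstacle I anticipate is making the greedy deletion argument quantitatively tight enough: one needs the number of points deleted from each individual $X_i$ to stay below $\frac{\eps}{2}N$, and the natural bound on deletions involves the current cycle-density $\delta'$, which is what we are trying to lower-bound, so the argument is circular and must be broken by choosing $\theta \approx \frac{1}{\eps}\log(1/\eps)^{2}$ and verifying via the monotonicity of $t(\log(1/t))^{2C}$ on $(0,t_{p,k})$ (guaranteed by the choice of $t_{p,k}$) that the inequality closes. A secondary point of care is ensuring the cleaned sets are nonempty and that the surviving disjoint cycles genuinely remain disjoint $k$-cycles in $X_1'\times\dots\times X_k'$ — this is immediate once we confirm fewer than $\frac{\eps}{2}N$ points (hence fewer than $\eps N$ of the disjoint cycles, since each deleted point kills at most one) are removed from each coordinate.
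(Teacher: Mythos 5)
Your overall strategy -- greedily delete high-degree points, then apply Proposition \ref{propo3} with $\theta$ roughly $1/\eps$ times a logarithmic correction -- is exactly the paper's strategy. However, you explicitly flag the central difficulty (``the natural bound on deletions involves the current cycle-density $\delta'$, which is what we are trying to lower-bound, so the argument is circular'') and then do not resolve it; the resolution is the actual content of the proof, so this is a genuine gap. A fixed threshold $\tau N^{k-2}$ does not work: if $\tau$ is calibrated to the initial density $m_0$ you may delete up to $m_0N/\tau$ points, and since $m_0$ can greatly exceed the final density, either too many points are deleted (destroying the reference collection of disjoint cycles) or the resulting $\theta=\tau/\delta'$ is too large for Proposition \ref{propo3} to give anything. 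The paper's fix is an \emph{adaptive} threshold: a point is deleted when it lies in at least $\frac{8(\log(1/\delta'))^{2}}{\eps}\delta' N^{k-2}$ of the \emph{currently remaining} cycles, where $\delta'$ is updated after every deletion. The number of deletions is then controlled by a dyadic decomposition: while $2^{-(j+1)}\le\delta'\le 2^{-j}$, each deletion destroys at least $\frac{4j^2}{\eps}2^{-j}N^{k-2}$ cycles out of at most $2^{-j}N^{k-1}$, so at most $\frac{\eps}{4j^2}N$ deletions occur at scale $j$, and $\sum_j j^{-2}\le 2$ gives at most $\frac{\eps}{2}N$ deletions in total. This is precisely why the $(\log(1/\delta'))^{2}$ factor appears in the threshold -- it is what makes the sum over scales converge -- and not, as you suggest, an artifact of the power trick (which is used only afterwards, to upgrade Proposition \ref{theo2-approx} to Theorem \ref{theo2}).

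Two smaller points. First, your claim that $\delta'\ge\eps/2$ after cleaning is false: having $\frac{\eps}{2}N$ disjoint surviving cycles only gives $\delta' N^{k-1}\ge\frac{\eps}{2}N$, i.e.\ $\delta'\ge\frac{\eps}{2}N^{2-k}$; fortunately only $\delta'>0$ is needed to invoke Proposition \ref{propo3}. Second, the final step is cleanest as a proof by contradiction: assume the cycle count is below $t^{*}N^{k-1}$ with $t^{*}=2^{-6kC}C^{-2C}\eps^{C}\log(1/\eps)^{-2C}$, so that $\delta'<t^{*}$ throughout; Proposition \ref{propo3} then yields $\delta'(\log(1/\delta'))^{2C}\ge 2^{-5kC}\eps^{C}$, the monotonicity of $t(\log(1/t))^{2C}$ on $(0,t_{p,k})$ transfers this to $t^{*}$, and a direct computation from the definition of $t^{*}$ (using $t^{*}>\eps^{2C}$ to bound $\log(1/t^{*})$) gives the contradiction. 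You correctly anticipated that the monotonicity hypothesis is the tool that closes the loop, but the argument needs the adaptive-threshold bookkeeping above before that tool can be applied.
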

\begin{proof} Set
\begin{equation}\label{eqdeftstar}
t^{*}=2^{-6kC}C^{-2C}\frac{\eps^{C}}{\log(1/\eps)^{2C}}.
\end{equation}
Suppose for contradiction that the number of $k$-cycles in $X_1\times \dots\times X_k$ is less than $t^{*}N^{k-1}$. Note that we have $t^{*}<\eps<t_{p,k}<\frac{1}{2}$ as $C>1$.

Our goal is to apply Proposition \ref{propo3}. In order to do so, we will step by step delete points one at a time from  the sets $X_1$,\dots,$X_k$ that are contained in too many $k$-cycles. At every moment during this procedure, let $\delta'$ be such that the total number of $k$-cycles in $X_1\times\dots\times X_k$ is $\delta'N^{k-1}$ (note that $\delta'$ changes during the procedure as points get deleted). Whenever for some $i=1,\dots,k$ there is a point of $X_i$ that occurs as $x_i$ in at least 
$$\frac{8(\log(1/\delta'))^{2}}{\eps}\delta'N^{k-2}$$
different $k$-cycles $(x_1,\dots,x_k)\in X_1\times \dots\times X_k$, delete this point from the corresponding set $X_i$ and update $\delta'$ (such that again the number of $k$-cycles in $X_1\times\dots\times X_k$ is equal to $\delta'N^{k-1}$). Note that during this process, as points get deleted, the number of $k$-cycles in $X_1\times\dots\times X_k$ decreases. Hence $\delta'$ is decreasing during the process. In particular we always have $\delta'<t^{*}<\frac{1}{2}$. 

Note that for any positive integer $j$, if $2^{-(j+1)}\leq \delta'\leq 2^{-j}$ at the beginning of a deletion step, then the deleted point is contained in at least
$$\frac{8(\log(1/\delta'))^{2}}{\eps}\delta'N^{k-2}\geq \frac{8j^{2}}{\eps}2^{-(j+1)}N^{k-2}=\frac{4j^{2}}{\eps}2^{-j}N^{k-2}$$
$k$-cycles in $X_1\times\dots\times X_k$. So as long as $2^{-(j+1)}\leq \delta'\leq 2^{-j}$, then in each step at least $\frac{4j^{2}}{\eps}2^{-j}N^{k-2}$ different $k$-cycles in $X_1\times\dots\times X_k$ get destroyed. However, if $2^{-(j+1)}\leq \delta'\leq 2^{-j}$, then there are only $\delta'N^{k-1}\leq 2^{-j}N^{k-1}$ different $k$-cycles. Hence the number of steps where at the beginning of the step we have $2^{-(j+1)}\leq \delta'\leq 2^{-j}$ is at most
$$\frac{2^{-j}N^{k-1}}{\frac{4j^{2}}{\eps}2^{-j}N^{k-2}}=\frac{\eps}{4j^2}N.$$
So the total number of steps in the process above is at most
$$\sum_{j=1}^{\infty}\frac{\eps}{4j^2}N=\frac{\eps}{4}N\sum_{j=1}^{\infty}\frac{1}{j^2}\leq \frac{\eps}{2}N.$$
In particular, the number of points deleted before the process terminates is at most $\frac{\eps}{2}N$.

In the beginning, we had at least $\eps N$ disjoint $k$-cycles in $X_1\times\dots\times X_k$. Since at most $\frac{\eps}{2}N$ points were deleted, after the deletion process we still have at least $\frac{\eps}{2}N$ disjoint $k$-cycles in $X_1\times\dots\times X_k$. In particular we have $0<\delta'<t^{*}$ after the deletion process.

Now let
$$\theta=\frac{8(\log(1/\delta'))^{2}}{\eps}>1.$$
When the deletion process has terminated, we have $\delta' N^{k-1}$ different $k$-cycles in $X_1\times\dots\times X_k$ (with $\delta'>0$) and for every $i=1,\dots,k$, each point of $X_i$ occurs as $x_i$ in at most $\theta\delta'N^{k-2}$ different $k$-cycles $(x_1,\dots,x_k)\in X_1\times \dots\times X_k$.

So we can apply Proposition \ref{propo3} to the sets $X_1,\dots, X_k\su \Fpn$ after the deletion process and we obtain
$$\delta'\left(\frac{8(\log(1/\delta'))^{2}}{\eps}\right)^{C}=\delta'\theta^{C_{p,k}}\geq 2^{-4kC_{p,k}}=2^{-4kC}.$$
Hence
$$\delta'(\log(1/\delta'))^{2C}\geq 2^{-4kC}8^{-C}\eps^{C}= 2^{-4kC}2^{-3C}\eps^{C}\geq 2^{-5kC}\eps^{C}.$$
Since $0<\delta'<t^{*}<t_{p,k}$ and the function $t(\log(1/t))^{2C}$ is monotonically increasing on the interval $(0,t_{p,k})$, this implies that
\begin{equation}\label{eqtstar}
t^{*}(\log(1/t^{*}))^{2C}\geq 2^{-5kC}\eps^{C}.
\end{equation}

On the other hand, note that $\eps<t_{p,k}$ implies by the choice of $t_{p,k}$ that
$$\eps^{C}(\log(1/\eps))^{2C}< 2^{-6kC}C^{-2C}.$$
Together with \eqref{eqdeftstar}, we obtain $t^{*}>\eps^{C}\cdot \eps^{C}=\eps^{2C}$, and consequently $\log(1/t^{*})<\log(1/\eps^{2C})=2C\log(1/\eps)$. Using \eqref{eqdeftstar} again, we therefore have
$$t^{*}(\log(1/t^{*}))^{2C}<2^{-6kC}C^{-2C}\frac{\eps^{C}}{\log(1/\eps)^{2C}}\cdot (2C\log(1/\eps))^{2C}=
2^{-6kC}2^{2C}\eps^{C}<2^{-5kC}\eps^{C}.$$
This contradicts \eqref{eqtstar}.
Hence our assumption must have been wrong and the number of $k$-cycles in $X_1\times \dots\times X_k$ is indeed at least
$$t^{*}N^{k-1}=2^{-6kC}C^{-2C}\frac{\eps^{C}}{\log(1/\eps)^{2C}}N^{k-1}.$$
This finishes the proof of Proposition \ref{theo2-approx}.
\end{proof}

We now complete the induction step by deducing Theorem \ref{theo2} for the fixed value of $k$ from Proposition \ref{theo2-approx}.

\begin{proof}[Proof of Theorem \ref{theo2} for $k$] Let $0<\eps<1$ and let  $X_1,\dots,X_k\su \Fpn$ be such that there is a collection of at least $\eps N$ disjoint $k$-cycles in $X_1\times \dots\times X_k$. Let the total number of $k$-cycles $(x_1,\dots,x_k)\in X_1\times \dots\times X_k$ be $\delta' N^{k-1}$. We need to show that $\delta'\geq \eps^{C}=\eps^{C_{p,k}}=\delta$.

Let $(x_1^{(1)},\dots,x_k^{(1)}),(x_1^{(2)},\dots,x_k^{(2)}),\dots, (x_1^{(h)},\dots,x_k^{(h)})$ be disjoint $k$-cycles in $X_1\times \dots\times X_k$ for some $h\geq \eps N$. Note that then for each $i=1,\dots,k$ the points $x_i^{(1)},\dots, x_i^{(h)}$ are distinct.

For each positive integer $m$ consider $\Fp^{nm}=\Fpn\times\dots\times \Fpn$ and for $i=1,\dots,k$ let $X_i^{m}\su \Fp^{nm}$ be the subset given by $X_i\times\dots\times X_i$. Note that a $k$-cycle in $X_1^{m}\times\dots\times X_k^{m}$ corresponds to a $k$-cycle in $X_1\times \dots\times X_k$ in the first $n$ coordinates, a $k$-cycle in $X_1\times \dots\times X_k$ in the next $n$ coordinates and so on. So a $k$-cycle in $X_1^{m}\times\dots\times X_k^{m}$ is the same as an $m$-tuple of $k$-cycles in $X_1\times \dots\times X_k$. Thus, the number of $k$-cycles in $X_1^{m}\times\dots\times X_k^{m}$ is equal to
$$(\delta' N^{k-1})^m=\delta'^mp^{nm(k-1)}.$$

For any $j_1,\dots,j_m\in [h]$, the points $(x_1^{(j_1)},x_1^{(j_2)}\dots,x_1^{(j_m)})$, $(x_2^{(j_1)},x_2^{(j_2)}, \dots,x_2^{(j_m)})$, \dots,$(x_k^{(j_1)},x_k^{(j_2)}\dots,x_k^{(j_m)})$ form a $k$-cycle in $X_1^{m}\times\dots\times X_k^{m}$. Also note that for distinct tuples $(j_1,\dots,j_m)\in [h]^{m}$ these $k$-cycles in $X_1^{m}\times\dots\times X_k^{m}$ are disjoint. So we have found a collection of $h^{m}\geq \eps^{m}N^m=\eps^{m}p^{nm}$ disjoint $k$-cycles in $X_1^{m}\times\dots\times X_k^{m}$.

For $m$ sufficiently large we have $\eps^{m}<t_{p,k}$, so we can apply Proposition \ref{theo2-approx} to $X_1^{m},\dots, X_k^{m}\su \Fp^{nm}$ and obtain that there are at least
$$2^{-6kC}C^{-2C}\frac{(\eps^m)^{C}}{\log(1/\eps^m)^{2C}}(p^{nm})^{k-1}$$
$k$-cycles in $X_1^{m}\times\dots\times X_k^{m}$. On the other hand, we have seen above that the number of $k$-cycles in $X_1^{m}\times\dots\times X_k^{m}$ equals $\delta'^mp^{nm(k-1)}$. Hence for all sufficiently large $m$ we must have
$$\delta'^m\geq 2^{-6kC}C^{-2C}\frac{(\eps^m)^{C}}{m^{2C}\log(1/\eps)^{2C}}.$$
Taking $m$-th roots on both sides gives
$$\delta'\geq 2^{-6kC/m}C^{-2C/m}\frac{\eps^{C}}{m^{2C/m}\log(1/\eps)^{2C/m}}$$
for all sufficiently large $m$. For $m\to \infty$ the right-hand side of the last inequality tends to $\eps^{C}$. Thus,
$$\delta'\geq \eps^{C}= \eps^{C_{p,k}}$$
as desired.
\end{proof}

This finishes the induction step. Thus, Theorem \ref{theo2} is proved for all $k\geq 3$.

\begin{remark} Since $C_{p,k}$ might not be the optimal exponent in Theorem \ref{theo1}, the reader might wonder why we concern ourselves with using the power trick to remove the logarithmic terms from the bound in Proposition \ref{theo2-approx}. However, note that Theorem \ref{theo2} does not only have a slightly better bound than Proposition \ref{theo2-approx} (namely by removing the logarithmic terms), but also Proposition \ref{theo2-approx} was only stated and proved for sufficiently small $\eps$. With the power trick, we obtain Theorem \ref{theo2} for all $\eps$. This makes the argument in the inductive proof much cleaner.\end{remark}

Finally, let us deduce Theorem \ref{theo1} from Theorem \ref{theo2}.

\begin{proof}[Proof of Theorem \ref{theo1}] Let $X_1,\dots, X_k\su \Fpn$ and choose a maximal collection of disjoint $k$-cycles in $X_1\times\dots\times X_k$. If there are at least $\eps N$ disjoint $k$-cycles in our collection, then by Theorem \ref{theo2} the total number of $k$-cycles $(x_1,\dots,x_k)\in X_1\times \dots\times X_k$ is at least $\delta N^{k-1}$. Otherwise, the collection consists of less than $\eps N$ disjoint $k$-cycles. Then let us delete all points of these less than $\eps N$ disjoint $k$-cycles from the corresponding sets $X_i$. Then from each $X_i$ we will have deleted less than $\eps N$ elements and no $k$-cycles remain in $X_1\times\dots\times X_k$, because the collection of disjoint $k$-cycles we considered in the beginning was maximal.
\end{proof}

For any $X_1,\ldots,X_k\su \Fpn$, the maximum number of disjoint $k$-cycles is within a factor $k$ of the number of elements one needs to delete from each set in order to remove all $k$-cycles. It follows that Theorem \ref{theo1} applied to $\eps/ k$ also implies Theorem \ref{theo2} with $\delta=(\eps/ k)^{C_{p,k}}$. Hence Theorem \ref{theo1} and Theorem \ref{theo2} are equivalent up to a change of a constant factor in the value of $\delta$.

\textit{Acknowledgments.} The authors would like to thank the anonymous referees for their helpful comments and suggestions.

\end{document}